\newtheorem{theorem}{Theorem}
\newtheorem{lemma}[theorem]{Lemma}
\newtheorem{corollary}[theorem]{Corollary}
\newtheorem{proposition}[theorem]{Proposition}
\theoremstyle{definition}
\title{Sharp thresholds for higher powers of Hamilton cycles in random graphs}
\author{Tam\'as Makai \thanks{Department of Mathematics, LMU Munich, Munich 80333 Germany. Email: \href{mailto:makai@math.lmu.de}{\nolinkurl{makai@math.lmu.de}.}}
   \and Matija Pasch\thanks{Email: \href{mailto:matija.pasch@gmail.com}{\nolinkurl{matija.pasch@gmail.com}}}
   \and Kalina Petrova\thanks{Institute of Science and Technology Austria (ISTA), Klosterneurburg 3400, Austria. Email: \href{mailto:kalina.petrova@ist.ac.at}{\nolinkurl{kalina.petrova@ist.ac.at}}.
    	This project has received funding from the European Union’s Horizon 2020 research and innovation programme under the Marie Skłodowska-Curie grant agreement No 101034413.
        $\begin{array}{l}
        \hspace{-0.2cm}
        \includegraphics[width=5.5mm, height=4mm]{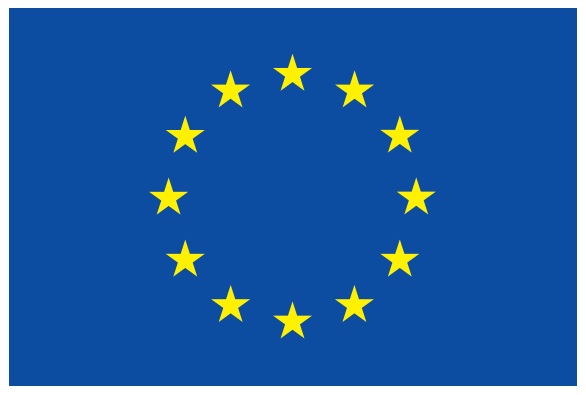}
        \end{array}
        $
        }
   \and Leon Schiller \thanks{Hasso Plattner Institute, University of Potsdam, Potsdam 14482, Germany. Email: \href{mailto:leon.schiller@hpi.de}{\nolinkurl{leon.schiller@hpi.de}} }}
\date{}
\DeclareMathOperator{\aut}{aut}
\begin{document}

\maketitle

\begin{abstract}
For $k \geq 4$, we establish that $p = (e/n)^{1/k}$ is a sharp threshold for the existence of the $k$-th power $H$ of a Hamilton cycle in the binomial random graph model. Our proof builds upon an approach by Riordan based on the second moment method, which previously established a weak threshold for $H$. This method expresses the second moment bound through contributions of subgraphs of $H$, with two key quantities: the number of copies of each subgraph in $H$ and the subgraphs' densities. We control these two quantities more precisely by carefully restructuring Riordan's proof and treating sparse and dense subgraphs of $H$ separately. This allows us to determine the exact constant in the threshold.
\end{abstract}
\section{Introduction}
Research devoted to problems concerning Hamilton cycles in the binomial random graph model $G(n,p)$\footnote{In the \emph{binomial random graph} $G(n,p)$ on $n$ vertices, each potential edge is included with probability $p=p(n)$ independently.} began with the introduction of random graphs by Erd\H{o}s and R\'enyi~\cite{erdos1960} in 1960.
The extensive history of inquiry into this topic is well exemplified by Frieze's survey~\cite{frieze2023}.
First, the threshold for the appearance of the Hamilton cycle has been established as $p=(1+o(1))\ln{n}/n$ and in later works the hitting time and other more precise results were derived \cite{ajtai1985,bollobas1984,erdos1960,komlos1983,korsunov1976,posa1976}.

K\"uhn and Osthus~\cite{kuhn2012} were the first to investigate the threshold for a $k$-th power of a Hamilton cycle.\footnote{The \emph{$k$-th power} of a graph $G$ is obtained by including an edge $\{u,v\}$ if the distance between $u$ and $v$ in $G$ is at most $k$.} They proved the upper bound $n^{-1/2+\varepsilon}$ for the case $k=2$. Since then, the threshold for the square of a Hamilton cycle has been the focus of a great deal of research. Nenadov and \v{S}kori\'c~\cite{nenadov2019} established the upper bound $O(\ln^4{n}/n^{1/2})$, which was subsequently improved to $O(\ln^3{n}/n^{1/2})$ by Fischer, \v{S}kori\'c, Steger, and Truji\'c~\cite{fischer2018}, and then to $O(\ln^2{n}/n^{1/2})$ by Montgomery~\cite{montgomery2018}, all employing the absorption method and connection techniques. The correct order of magnitude $\Theta(n^{-1/2})$ was finally shown by Kahn, Narayanan, and Park~\cite{kahn2021}, who follow a different approach, based on the resolution of the fractional `expectation threshold' conjecture~\cite{frankston2019}. They conjectured that the sharp threshold should be $\sqrt{e/n}$ --- a hypothesis that has attracted significant interest in the community~\cite{frieze2023,perkins2024}.

Turning to higher powers of Hamilton cycles, K\"uhn and Osthus~\cite{kuhn2012} observed that for $k \geq 3$, the correct order of magnitude $\Theta(n^{-1/k})$ of the threshold for the $k$-th power follows from the first moment bound and a result of Riordan~\cite{riordan2000}, which applies to a broad class of spanning subgraphs. While~\cite{dudek2020} suggests that Riordan's theorem also implies the sharp threshold, this is not directly the case, as applying~\cite{riordan2000} requires $pn^{1/k} \rightarrow \infty$. In this paper, we extend Riordan's approach to show that the sharp threshold for the $k$-th power of a Hamilton cycle for $k\geq 4$ matches the lower bound given by the first moment method.

\begin{theorem}\label{thm:main}
For all $k\geq 4$, we have that $p^*=(e/n)^{1/k}$ is a sharp threshold for the existence of a $k$-th power $H$ of a Hamilton cycle in $G(n,p)$. That is, for all $\varepsilon>0$ and all $p\le(1-\varepsilon)p^*$, there is whp\footnote{A sequence $E_n$ of events holds \emph{with high probability} (whp), if the probability of $E_n$ tends to one as $n$ tends to infinity.} no copy of $H$ in $G(n,p)$, and for all $p\ge(1+\varepsilon)p^*$, there is whp a copy of $H$ in $G(n,p)$.
\end{theorem}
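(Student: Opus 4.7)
The \emph{0-statement} follows from a first moment computation. Let $X$ be the number of copies of $H$ in $G(n,p)$ and set $\ell := |\{\text{copies of } H \text{ in } K_n\}| = n!/(2n)$; since $e(H) = kn$, we have $\mathbb{E}[X] = \ell\, p^{kn}$. Stirling's formula gives $\mathbb{E}[X] = \Theta(n^{-1/2}) \cdot (p^k n / e)^n$, so for $p \leq (1-\varepsilon)(e/n)^{1/k}$ this is $O(n^{-1/2}(1-\varepsilon)^{kn}) \to 0$, and Markov's inequality yields $\mathbb{P}[X > 0] \to 0$.

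For the \emph{1-statement}, fix $p = (1+\varepsilon)(e/n)^{1/k}$ and aim for a second moment bound: by Chebyshev's inequality it suffices to prove $\mathbb{E}[X^2] = (1+o(1))\mathbb{E}[X]^2$. Fixing one copy $H$ on $[n]$ and letting $H'$ range over all labeled copies, the standard manipulation yields
$$\frac{\mathbb{E}[X^2]}{\mathbb{E}[X]^2} \;=\; \frac{1}{\ell}\sum_{H'}p^{-e(H \cap H')} \;=\; \frac{1}{\ell}\sum_{J \subseteq H} N(J)\, p^{-e(J)},$$
where $N(J) := |\{H' : H \cap H' = J\}|$. The goal is to bound this sum by $1 + o(1)$.

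The plan is to partition the subgraphs $J$ according to a density parameter, such as $e(J)/|V(J)|$ compared with the ambient density $k$ of $H$, and treat the sparse and dense cases separately. For sparse $J$ I would follow Riordan, taking care to tighten a multiplicative constant in his bound: that constant is harmless when $pn^{1/k}\to\infty$ but is precisely the obstruction to reaching the sharp threshold, and a more careful accounting of embeddings should recover the correct asymptotic. For dense $J$ the structural rigidity of $k$-th powers of Hamilton cycles is essential---such $J$ must essentially be a union of short arcs (intervals of consecutive cycle vertices), which heavily restricts the copies $H'$ agreeing with $H$ on $J$. An arc-decomposition argument then bounds $N(J)$ tightly enough to beat the $p^{-e(J)}$ factor.

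The hardest part will be the dense regime at the threshold: Riordan's argument handles dense $J$ via a crude volume bound that becomes too weak at $p^*$, and a sharper structural analysis is required to recover the exact constant $e^{1/k}$. The hypothesis $k \geq 4$ enters here, providing enough edge density in $H$ to compensate for the combinatorial explosion of dense subgraphs (which is also what prevents the argument from reaching $k \in \{2,3\}$). Combining the sparse and dense estimates to show the whole sum equals $1 + o(1)$, Chebyshev's inequality then delivers $\mathbb{P}[X > 0] = 1 - o(1)$.
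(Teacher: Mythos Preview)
Your 0-statement is correct and matches the paper's first-moment argument.

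The 1-statement, however, has a genuine gap: the second moment method \emph{does not work} in $G(n,p)$ for this problem, so your target $\mathbb{E}[X^2]=(1+o(1))\mathbb{E}[X]^2$ is simply false. To see this, note that your identity gives
\[
\frac{\mathbb{E}[X^2]}{\mathbb{E}[X]^2}=\mathbb{E}_{H'}\!\left[p^{-e(H\cap H')}\right],
\]
where $H'$ is a uniformly random labeled copy. For a uniform $H'$ the overlap $Z=e(H\cap H')$ is approximately Poisson with mean $\alpha\cdot kn\approx 2k^2$ (here $\alpha=kn/\binom{n}{2}$), so $\mathbb{E}[p^{-Z}]\approx\exp\bigl((p^{-1}-1)\cdot 2k^2\bigr)$. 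Since $p^{-1}=\Theta(n^{1/k})$, this blows up like $\exp(\Theta(n^{1/k}))$. In other words, even the ``trivial'' small-overlap configurations already force $\mathbb{E}[X^2]/\mathbb{E}[X]^2\to\infty$; no amount of sharpening the dense-$J$ analysis will rescue the sum to $1+o(1)$. Heuristically, planting one copy of $H$ in $G(n,p)$ inflates the expected edge count by $kn$, which in turn inflates the expected number of further copies far beyond $\mathbb{E}[X]$.

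The paper (following Riordan) avoids this by passing to the uniform model $G(n,M)$ with $M=pN$, where the edge count is fixed and this source of variance disappears; only then does the second-moment ratio admit a decomposition that can be driven to $1+o(1)$. One then transfers back to $G(n,p)$ via edge-count concentration. Your high-level instincts about the remainder---splitting subgraphs by density, using Riordan's bound in the sparse regime, and exploiting the rigidity of dense subgraphs (few components, mostly full-degree vertices, hence few embeddings into $H$) to beat $p^{-e(J)}$---are in the right spirit and do align with the paper's Lemmas~\ref{lem:lemma4.4} and~\ref{lem:lemma4.4good}. But as written the proposal is a sketch rather than a proof, and without the switch to $G(n,M)$ the programme cannot be carried out at all.
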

In the withdrawn draft~\cite{zhukovskii2023}, Zhukovskii presented a more general result that established a sharp threshold for a wider class of spanning regular subgraphs. He used a different method, namely fragmentation, which was also employed by Kahn, Narayanan, and Park in \cite{kahn2021}. As we learned via private communication, the draft was withdrawn due to a mistake in the second moment calculation which, in particular, led to an incorrect argument for the second power of a Hamilton cycle. Zhukovskii is planning to upload a corrected version of this result.

\subsection{Proof strategy}
Theorem~\ref{thm:main} consists of two statements, the lower bound and the upper bound on the threshold for the containment of the $k$-th power of a Hamilton cycle in $G(n,p)$. The lower bound follows easily from the first moment method and has been known for a long time. For completeness, we provide it in the appendix (Section~\ref{subsec:proof_of_main_result}). In the remainder of the paper, we focus on the upper bound.

For the upper bound, our approach is inspired by the method in~\cite{riordan2000}, where Riordan gives sufficient conditions for the appearance of a large family of spanning subgraphs $H$ in $G(n,p)$. The author makes use of the second moment method. As a consequence of that, the main objective becomes showing that the variance of the number $X$ of copies of $H$ in $G(n,p)$ is negligible relative to its squared expectation. The key idea then is to write the ratio between the variance of $X$ and $(\mathbb{E}(X))^2$ as a sum of contributions of all subgraphs of $H$. This makes it much easier to estimate each term.

The analysis in~\cite{riordan2000} is not tight enough to yield the sharp threshold for powers of Hamilton cycles. 
The first additional key idea we introduce is to treat sparse and dense subgraphs of $H$ separately. 
In particular, the bound from~\cite{riordan2000} on the contributions of sparse subgraphs is sufficient. 
On the other hand, we significantly improve the bound on the contributions of dense subgraphs, which we can do since we focus on a specific spanning graph, as opposed to Riordan's general result. 
For this, we rely on the fact that when $F \subset H$ has almost the same density as $H$, then there are very few copies of $F$ in $H$, since we do not have much freedom at each step when embedding $F$ into $H$. 

The final step is to sum up the bounds over every subgraph of $H$.
Since we differentiate between dense and sparse subgraphs of $H$, it is not possible to factorize the sum of the bounds into components' contributions (and then sum the bounds for connected subgraphs over the same vertex set) as in~\cite{riordan2000} (for more details, see Section~\ref{sec:proof_strategy}). Instead, we first establish the partial sums of the bounds over every subgraph with the same vertex set and component structure. We upper bound these partial sums in a different way, depending on the density of the densest graph with a fixed component structure. Summing these contributions gives a suitable upper bound on the variance of the number of copies of $H$, which together with the second moment method implies our result.

\section{Preliminaries and proof outline}
We start by introducing the relevant notions and notation in Section~\ref{sec:notions_notation}. Next, we give a proof of our main results modulo the lemmas in Section~\ref{sec:proof_strategy}.
Then we give an overview of the rest of the paper in Section~\ref{sec:overview}.
\subsection{Notions and notation}\label{sec:notions_notation}
Throughout, we consider graphs with vertices in $[n]=\{1,\dots,n\}$ for $n$ sufficiently large. 
Let $K_n$ be the complete graph on the vertex set $[n]$. The uniform Erd\H{o}s–R\'enyi model $G(n,M)$ is the random graph on the vertex set $[n]$ in which $M$ of the $\binom{n}{2}$ potential edges are sampled uniformly at random. We use the standard Bachmann-Landau notation $o(\cdot)$, $O(\cdot)$, $\Theta(\cdot)$, $\Omega(\cdot)$ and $\omega(\cdot)$, exclusively with respect to $n$. We omit rounding of real numbers to their nearest integers whenever it is not essential for the argument.

For a graph $F$, let $V(F)$ be the set of vertices of $F$, $E(F)$ --- the set of edges of $F$, and $C(F)$ --- the set of connected components of $F$.
Let $|F|=|V(F)|$ be the order of $F$, $c(F)=|C(F)|$ --- the number of components of $F$,\footnote{This is denoted by $k(F)$ in \cite{riordan2000}, but we reserve $k$ for the power of the cycle.} and $e(F)=|E(F)|$ --- the number of edges of $F$. For graphs $F$ and $G$, we denote by $F \subset G$ the statement that $F$ is a (not necessarily proper) subgraph of $G$, and similarly for a subset $T$ of a set $S$. We denote by $H[S]$ the induced subgraph of $H$ on the vertex set $S \subset V(H)$.

Since our proof decomposes various subgraphs of $H$ into different types of connected components, we need some specific definitions for that purpose. Let $r(F)=|F|-c(F)$ be the rank of $F$. We call an edge \emph{isolated} if it is the only edge in its component.
Let
$$F^\circ=\bigcup_{C\in C(F):|C|>2}C$$
be $F$ with all isolated vertices and isolated edges discarded. If $e(F^\circ) >0$, we define the density of $F$ as
$$\gamma(F)=\frac{e(F^\circ)}{|F^\circ|-2c(F^\circ)}=\frac{e(F^\circ)}{r(F^\circ)-c(F^\circ)}.$$
Note that $\gamma(F)=\sum_{C\in C(F^\circ)}\mu_F(C)\gamma(C)$ can be seen as an expected value of $\gamma(C)$ with respect to the probability distribution given by
\[
\mu_F(C)=\frac{|C|-2}{\sum_{C\in C(F^\circ)}(|C|-2)}
\]
for each $C\in C(F^\circ)$.

A graph $F$ is \emph{good} if $V(F)=[n]$ and for all $C\in C(F)$, we have $|C|\neq 2$, i.e.~$F$ has no isolated edges. Notice that for good $F$, we have $e(F)=e(F^\circ)$ and $r(F)=r(F^\circ)$.
For graphs $F,G$, let $X_F(G)$ be the number of copies of $F$ in $G$.

For $s>0$, we sometimes use cyclic addition on $[s]$, e.g.~$s-1+3=2$, and the corresponding cyclic order. 

Throughout the paper, we fix $k \ge 4$ and we denote by $H$ the $k$-th power of the cycle with vertices $[n]$ and edges $\{v,v+1\}$ for $v\in[n]$. That is, the edges of $H$ are given by $\{v,v+i\}$ for $v\in[n]$ and $i \in [k]$.
For $3\le s\le n$, let
$$\gamma(s)=\max_{F\subset H:|F|=s, e(F^\circ)>0}\gamma(F)$$ 
be the maximal density of any subgraph $F$ of $H$ on $s$ vertices with $e(F^\circ) > 0$. Furthermore, let $H_s\subset H$ be the subgraph induced by $[s]$ and let $P_s$ be the $k$-th power of the path with vertices $[s]$ and edges $\{v,v+1\}$ for $v\in[s-1]$.

Let $F\subset H$ be a subgraph of $H$. We define the \emph{completion $F^*\subset H$} of $F$ as the unique graph given by $$F^*=\bigcup_{C \in C(F)} H\big[V(C)\big],$$ that is, the union of all induced subgraphs of $H$ with vertex sets $V(C)$ for $C \in C(F)$.

We fix some function $\zeta= \zeta(n)$ such that $\zeta=o(1)$ and $\zeta=\omega(1/\ln(n))$. For any parameter $\zeta' > 0$, we say that $F\subset H$ is \emph{$\zeta'$-dense} if $e(F^\circ)>0$ and $\gamma(F)\ge\gamma(n)-\zeta'$. Conversely, we refer to $F \subset H$ as \emph{$\zeta'$-sparse} if $e(F)>0$, and either $e(F^\circ) = 0$, or $\gamma(F)< \gamma(n)-\zeta'$. Note that the empty graph is the only graph that is neither $\zeta'$-dense nor $\zeta'$-sparse. If a graph is $\zeta$-dense (respectively $\zeta$-sparse), we call it simply dense (respectively sparse). Finally, we say that $F \subset H$ is \emph{completion-dense} if $e(F^\circ) > 0$ and $\gamma(F) \geq (1- \zeta) \gamma(F^*)$, and \emph{completion-sparse} if $e(F)>0$, and either $e(F^\circ)=0$, or $\gamma(F) < (1- \zeta) \gamma(F^*)$. Note that completion-density is defined relative to the completion of the considered graph.

Let $N=e(K_n)=\binom n2$.
Furthermore, let $\Delta=2k$ and notice that $H$ is $\Delta$-regular.
We have $e(H)=kn$. Let $\alpha=kn/N$, meaning that $e(H) = \alpha N$.
\subsection{Proof of the main result}\label{sec:proof_strategy}

The proof of the upper bound in Theorem~\ref{thm:main} partially follows the proof strategy in~\cite{riordan2000}. In particular, we also work in $G(n,M)$ instead of $G(n,p)$. As noted in the remark on page 131 in \cite{riordan2000}, this is necessary since the variance of the number of copies of $H$ in $G(n,p)$ is too high for the second moment method to yield the desired result. Indeed, when considering the probability that two overlapping copies of $H$ both appear, one can observe that the appearance of one copy significantly increases the probability that the other copy is also present. Another way to think of this phenomenon is that planting $H$ in $G(n,p)$ significantly increases the expected number of edges, whereas in the uniform model $G(n,M)$ that is not the case. We now state our main result for $G(n,M)$.

\begin{theorem}\label{thm:main_uniform}
For all $k \geq 4$ and $\varepsilon >0$, the following holds for the sequence $p^*=(e/n)^{1/k}$. For all $p\ge(1+\varepsilon)p^*$, the uniform Erd\H{o}s-R\'enyi random graph $G(n,pN)$ whp contains a copy of the $k$-th power $H$ of a Hamilton cycle.
\end{theorem}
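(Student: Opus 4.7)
The plan is to apply the second moment method to $X = X_H(G(n, pN))$. By Chebyshev's inequality, $\mathbb{P}(X = 0) \leq \mathrm{Var}(X)/(\mathbb{E}(X))^2$, so it suffices to prove $\mathbb{E}(X^2) = (1+o(1))(\mathbb{E}(X))^2$. As the authors already emphasize, working in the uniform model $G(n, M)$ rather than $G(n, p)$ is essential: in $G(n,p)$, planting one copy of $H$ significantly inflates the expected edge count, which makes the overlap probability of two copies of $H$ too large for the second moment method to yield the sharp constant.

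Following~\cite{riordan2000}, I would decompose $\mathbb{E}(X^2)/(\mathbb{E}(X))^2$ as a sum over subgraphs $F \subset H$, where $F$ represents the common subgraph of two overlapping copies of $H$. Each summand has the shape $X_F(H)$ times a probability ratio that depends on $e(F)$ and the edge density $\alpha = 2k/(n-1)$. The first key refinement is to partition the index set into \emph{sparse} subgraphs, with $\gamma(F) < \gamma(n) - \zeta$, and \emph{dense} subgraphs, with $\gamma(F) \geq \gamma(n) - \zeta$. For sparse $F$, the bounds developed in~\cite{riordan2000} are strong enough, because the density gap $\zeta$ makes the probability factor decay rapidly in $e(F)$. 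For dense $F$, I would exploit a structural feature of the $k$-th power of a cycle: dense subgraphs of $H$ are essentially rigid, so embedding them into $H$ leaves almost no freedom and $X_F(H)$ is far smaller than the generic bound Riordan uses. This sharpening is precisely what is needed to recover the exact constant $e$ in $(e/n)^{1/k}$.

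The main obstacle will be assembling the two families of bounds into a single $1+o(1)$ estimate. Since sparse and dense subgraphs are bounded by different estimates, the sum no longer factorizes cleanly over connected components, as it did in~\cite{riordan2000}. My plan is to first form partial sums over subgraphs $F \subset H$ that share both a fixed vertex set $S \subset [n]$ and a fixed list of isomorphism types of connected components, and then decide per partial sum whether to apply the sparse bound or the dense bound, based on the density of the densest subgraph with that vertex set and component structure. Summing the partial sums over all component structures and vertex sets, while tracking the leading constants so that they collapse to $(e/n)^{1/k}$ rather than any larger constant, should then give $\mathbb{E}(X^2)/(\mathbb{E}(X))^2 = 1 + o(1)$ and finish the proof via Chebyshev.
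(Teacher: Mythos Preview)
Your proposal is correct and mirrors the paper's approach closely: the paper groups subgraphs by their \emph{completion} $F^*$ (playing the role of your ``fixed vertex set and component structure''), then applies Riordan's bound (Lemma~\ref{lem:lemma4.4}) when $F^*$ is $\zeta$-sparse and the rigidity bound (Lemma~\ref{lem:lemma4.4good}) when $F^*$ is $\zeta$-dense, exactly as you outline. The paper adds one technical refinement you omit---within the dense-completion case it further separates completion-sparse from completion-dense $F$ (Proposition~\ref{prop:dense_completions_sparse}) to justify applying the rigidity bound uniformly across the group---but this is a detail of execution, not a gap in strategy.
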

Note that the upper bound in Theorem~\ref{thm:main} follows from Theorem~\ref{thm:main_uniform} using the concentration of the number of edges. For completeness, we provide the details in the appendix (Section~\ref{subsec:proof_of_main_result}).

One of the main results in~\cite{riordan2000}, namely Theorem 2.1, states that for every $n$-vertex graph $\bar{H}$ with maximum degree $\bar{\Delta} = \bar{\Delta}(\bar{H})$, $N = \binom{n}{2}$, $\bar{\alpha} = \bar{\alpha}(\bar{H}) = e(\bar{H})/N$, and 
\begin{align}
    \label{eq:gammabar}
    \bar{\gamma} = \bar{\gamma}(\bar{H}) = \max_{F \subset \bar{H}: |F| \geq 3} \Bigg\{\frac{e(F)}{|F| - 2}\Bigg\},
\end{align}
the conditions $\bar{\alpha} N \geq n$, $pN = \omega(1)$, $(1-p)\sqrt{n} = \omega(1)$, $np^{\bar{\gamma}}/\bar{\Delta}^4 = \omega(1)$ are sufficient to conclude that $G(n, pN)$ whp contains a copy of $\bar{H}$. One can check that for $\bar{H} = H$ and $p = (1+\varepsilon)(e/n)^{1/k}$, all these conditions except the last one hold. In particular, $np^{\bar{\gamma}(H)}/\bar{\Delta}^4 = O(1)$ whenever $p = (1+\varepsilon)(e/n)^{1/k}$ since $\bar{\gamma}(H) \geq \frac{nk}{n-2}$. Thus, Riordan's theorem does not directly yield a sharp threshold for powers of Hamilton cycles, and we need to develop and refine his argument to obtain our result.

From this point on, we fix $\varepsilon>0$ and $p=(1+\varepsilon)(e/n)^{1/k}$. We start by applying the second moment method, as in~\cite{riordan2000}. Letting $X$ denote the number of copies of some spanning graph $\bar{H}$ in $G(n,\bar{p}N)$, we have by Chebyshev's inequality that
\begin{align}
\label{eq:secondMoment}
\mathbb{P}(X=0) \leq \mathbb{P}\left((X-\mu)^2 \geq \mu^2\right) \leq \frac{\mathrm{Var}(X)}{\mu^2},
\end{align}
where $\mu = \mathbb{E}(X)$. The proof in~\cite{riordan2000} defines $f = 1 + \frac{\mathrm{Var}(X)}{\mu^2} $, where $f = f(\bar{H}, \bar{p})$ depends on $\bar{p}$ and $\bar{H}$ (see~\cite[Lemma 3.1]{riordan2000}). It is then enough to show that $f = 1+ o(1),$ which would imply $\mathbb{P}(X=0) = o(1)$. We follow that strategy, starting by stating an upper bound on $f$ given in~\cite{riordan2000} which applies to our setting. For this purpose, we define the following quantity. Note that the sum below considers only spanning subgraphs of $\bar{H}$, but these may have isolated vertices.
$$ S_{\bar{H},\bar{p}}=\sum_{\substack{F\subset \bar{H},\\  |F|=n}}\left(\frac{1}{\bar{p}}-1\right)^{e(F)}\left(1+n^{-1/2}\right)^{r(F)}\frac{X_F(\bar{H})}{X_F(K_n)}.$$
A key idea in~\cite{riordan2000} is to rewrite $f$ as a sum of contributions over all spanning subgraphs of $\bar{H}$. This then facilitates the following upper bound on $f$ in terms of $S_{\bar{H},\bar{p}}$.
\begin{lemma}[Lemma 4.2 in~\cite{riordan2000}]\label{lem:lemma4.2}
Suppose that for some $\bar{p}$ and some $n$-vertex graph $\bar{H}$ with maximum degree $\bar{\Delta} = \bar{\Delta}(\bar{H})$, $N = \binom{n}{2}$, and $\bar{\alpha} = \bar{\alpha}(\bar{H}) = e(\bar{H})/N$, the following conditions hold 
\begin{multicols}{3}
\begin{itemize}
\item $\bar{\alpha} N \geq n$;
\item $\bar{p}N = \omega(1)$;
\item $(1-\bar{p})\sqrt{n} = \omega(1)$;
\item $n\bar{p}^2 / \bar{\Delta}^4 = \omega(1)$;
\item $\bar{\alpha}^3 N/ \bar{p}^{2} = o(1)$.
\end{itemize}
\end{multicols}
Then
$$ f(\bar{H},\bar{p}) \leq \big(1+o(1)\big) \exp\left(- \frac{1-\bar{p}}{\bar{p}}\bar{\alpha}^2N\right) S_{\bar{H}, \bar{p}}.$$
\end{lemma}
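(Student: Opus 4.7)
The plan is to follow the second-moment computation of \cite[Section 4]{riordan2000} and track how each of the stated hypotheses enters. By definition,
\[
f(\bar H,\bar p)-1=\frac{\mathrm{Var}(X)}{\mu^{2}}=\frac{\mathbb{E}(X^{2})-\mu^{2}}{\mu^{2}},
\]
so the task reduces to evaluating $\mathbb{E}(X^{2})$ precisely. First, I would expand $X^{2}$ as a double sum over ordered pairs $(H_{1},H_{2})$ of labeled copies of $\bar H$ in $K_{n}$ and group these pairs according to the labeled intersection $F=H_{1}\cap H_{2}\subset\bar H$. This yields a sum over $F\subset\bar H$ in which the combinatorial factor is expressible through $X_F(\bar H)$ and $X_F(K_n)$, while the probability contribution is the hypergeometric weight $\binom{N-2e(\bar H)+e(F)}{M-2e(\bar H)+e(F)}/\binom{N}{M}$ for $M=\bar p N$.

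The core of the argument is then an asymptotic evaluation of the ratio
\[
\frac{\binom{N-2e(\bar H)+e(F)}{M-2e(\bar H)+e(F)}\binom{N}{M}}{\binom{N-e(\bar H)}{M-e(\bar H)}^{2}}
\]
that appears after dividing by $\mu^{2}$. Rewriting each binomial coefficient as a product of falling factorials and pairing numerator and denominator factors appropriately, this quantity can be massaged into $(1/\bar p-1)^{e(F)}$ multiplied by a product of small multiplicative corrections, one for each shared or missing edge. The conditions $\bar pN=\omega(1)$ and $(1-\bar p)\sqrt{n}=\omega(1)$ ensure that individual correction factors are close to $1$; the condition $n\bar p^{2}/\bar\Delta^{4}=\omega(1)$ bounds the aggregate second-order contribution up to a factor of the form $(1+n^{-1/2})^{r(F)}$; and $\bar\alpha^{3}N/\bar p^{2}=o(1)$ guarantees that the dominant Gaussian-type contribution, arising from expanding $\ln(1-i/(\bar pN))-\ln(1-i/N)$ to first order in $i$, factors out uniformly in $F$ and yields exactly the exponential $\exp\bigl(-(1-\bar p)\bar\alpha^{2}N/\bar p\bigr)$ extracted in the statement.

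Combining these estimates and summing over $F\subset\bar H$ with $|F|=n$ (isolated vertices contributing only trivially) then produces the claimed upper bound $(1+o(1))\exp\bigl(-(1-\bar p)\bar\alpha^{2}N/\bar p\bigr)S_{\bar H,\bar p}$ on $\mathbb{E}(X^{2})/\mu^{2}$, and hence on $f(\bar H,\bar p)$. The principal technical difficulty I anticipate is the careful bookkeeping in the factorial expansion: each error term must be controlled in a way that survives summation over \emph{all} $F\subset\bar H$, which is precisely where the quantitative hypotheses of the lemma become essential, and it is exactly this calculation that is carried out in \cite[Lemma~4.2]{riordan2000}.
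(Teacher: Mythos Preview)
The paper does not give its own proof of this lemma; it is simply quoted verbatim as Lemma~4.2 of \cite{riordan2000} and used as a black box. Your proposal correctly identifies that the proof lives in \cite{riordan2000} and gives an accurate high-level sketch of Riordan's argument (grouping pairs of copies by their intersection, expanding the hypergeometric ratio, and tracking how each hypothesis controls the error terms), so you are doing exactly what the paper does, namely deferring to \cite{riordan2000}.
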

One can check that the conditions of Lemma~\ref{lem:lemma4.2} are satisfied for $\bar{H} = H$, $\bar{p} = p$, $\bar{\alpha} = \alpha$, $\bar{\Delta} = \Delta$. Thus, we get that $f(H,p) \leq \big(1+o(1)\big) e^{- \frac{1-p}{p}\alpha^2N} S_{H,p}$. From now on, we focus on upper bounding $S_{H,p}$, which we simply denote by $S_H$. We also fix $f=f(H,p)$.

Next, the proof in~\cite{riordan2000} expresses $S_H$ in terms of a sum $S'_H$ of the contributions of all good subgraphs, i.e., graphs with no isolated edges, each multiplied by a factor that accounts for adding all the possible sets of isolated edges to it. Here we make our first modification to~\cite{riordan2000}, which is that we define $S'_H$ so that it gives a tighter upper bound on $S_H$. With $\sum^{\prime}$ denoting the summation only over good graphs, let
\begin{align}\label{eq:S'def} \hspace{-0.5em}
S'_H = \sum_{\substack{F\subset H, \\|F| = n}}^\prime \phi(F), \hspace{2em} \phi(F) = (p^{-1}-1)^{e(F)}\nu^{r(F)}\frac{X_F(H)}{X_F(K_n)},\hspace{2em}
\nu=\left(1+\frac1{\sqrt n}\right)\exp\left(\frac{7\Delta^2}{2\sqrt np}\right),
\end{align}
be our version of $S'_H$ (cf.~the math display above Lemma 4.3 in \cite{riordan2000}, where $\nu$ is replaced by $2$).
Note that $\nu=1+o(1)$. Similarly to~\cite{riordan2000}, we show the following bound.
\begin{lemma}\label{lem:lemma4.3}
We have $S_H\le(1+o(1))\exp\left(\frac{1-p}{p}\alpha^2 N\right)S'_H$.
\end{lemma}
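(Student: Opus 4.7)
The plan mirrors the strategy of Lemma~4.3 in~\cite{riordan2000}: partition the sum $S_H$ by the ``good part'' of each spanning subgraph. For $F\subset H$ spanning, let $J(F)$ denote its set of isolated edges and set $F'=F\setminus J(F)$; then $F'$ is good, $J(F)$ is a matching in $E(H)$ whose vertex set is contained in the isolated-vertex set $V_0=V_0(F')$ of $F'$, and $F\mapsto(F',J(F))$ is a bijection between spanning subgraphs of $H$ and such pairs. Using $e(F)=e(F')+|J|$ and $r(F)=r(F')+|J|$ gives
\[
S_H=\sum_{F'\text{ good}}^{\prime}(p^{-1}-1)^{e(F')}(1+n^{-1/2})^{r(F')}\,T(F'), \quad T(F')=\!\!\sum_{J\subset E(H[V_0])\text{ matching}}\!\!q^{|J|}\frac{X_{F'\cup J}(H)}{X_{F'\cup J}(K_n)},
\]
with $q=(p^{-1}-1)(1+n^{-1/2})$. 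It therefore suffices to show, uniformly in good spanning $F'$, that
\[
T(F')\le (1+o(1))\exp\!\Bigl(\tfrac{1-p}{p}\alpha^2N\Bigr)\Bigl(\tfrac{\nu}{1+n^{-1/2}}\Bigr)^{r(F')}\frac{X_{F'}(H)}{X_{F'}(K_n)}.
\]

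To bound $T(F')$, I would use the identity $X_F(H)/X_F(K_n)=\mathbb{P}[\pi(F)\subset H]$ for a uniform random permutation $\pi$ of $[n]$, and condition on $\pi(F')\subset H$. Since $\pi|_{V_0}$ is then a uniform bijection onto $U:=[n]\setminus\pi(V(F')\setminus V_0)$, a direct double-counting gives
\[
\sum_{J:\,|J|=m}\mathbb{P}[\pi(J)\subset H\mid\pi(F')\subset H]=N_m(F')\cdot\frac{\mathbb{E}[M^H_m(U)\mid\pi(F')\subset H]}{M_m(v_0)},
\]
where $v_0=|V_0|$, $N_m(F')$ and $M^H_m(U)$ count matchings of size $m$ in $H[V_0]$ and in $H[U]$, and $M_m(v_0)$ counts such matchings in $K_{v_0}$. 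The $\Delta$-regularity of $H$ yields $N_m(F'),M^H_m(U)\le(kv_0)^m/m!$, while $M_m(v_0)\sim v_0^{2m}/(2^m m!)$, so the resulting main term is bounded by $\sum_{m\ge 0}q^m(2k^2)^m/m!=e^{2k^2 q}$. A short computation using $\alpha^2N=2k^2n/(n-1)$ and $q=(1-p)(1+n^{-1/2})/p$ shows $e^{2k^2 q}\le(1+o(1))\exp((1-p)\alpha^2N/p)$, giving the desired main term.

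The main obstacle is absorbing the cumulative error coming from (i)~the asymptotic approximation $M_m(v_0)\sim v_0^{2m}/(2^m m!)$, which has relative error $O(m^2/v_0)$ per $m$, and (ii)~the deviation of $\mathbb{E}[M^H_m(U)\mid\pi(F')\subset H]$ from its unconditional value, arising because $U$ misses the $n-v_0=O(r(F'))$ vertices in non-trivial components of $F'$. Both errors must be absorbed into the per-rank-unit factor $(\nu/(1+n^{-1/2}))^{r(F')}=\exp(7\Delta^2r(F')/(2\sqrt{n}p))$ rather than as a global constant. The key quantitative step is that, after summing over $m$ (dominated by $m$ of order $q\sim n^{1/k}$), these errors combine to give a multiplicative factor at most $\exp(O(\Delta^2r(F')/(\sqrt{n}p)))$, matching the definition of $\nu$; summing over good $F'$ then completes the proof.
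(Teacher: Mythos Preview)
Your plan is correct and tracks the paper's proof, which follows Riordan's Lemma~4.3: split each spanning $F$ into its good part $F'$ plus an isolated-edge matching $J$, and bound the inner sum over $J$. Your double-counting identity for $\sum_{|J|=m}\mathbb P[\pi(J)\subset H\mid\pi(F')\subset H]$ is valid, and the main term $e^{2k^2q}\le(1+o(1))\exp\bigl((1-p)\alpha^2N/p\bigr)$ is exactly what is needed. One simplification: your error~(ii) is unnecessary, since the pointwise bound $M^H_m(U)\le(kv_0)^m/m!$ holds for \emph{every} $v_0$-set $U$, so the conditional distribution of $U$ never enters the upper bound.

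The genuine gap is in the step you yourself label as ``key.'' The heuristic that the sum is ``dominated by $m$ of order $q$,'' with attendant error $\exp(O(q^2/v_0))$ from~(i), is only valid when $v_0\gg q^2\sim 1/p^2$; for smaller $v_0$ the sum is truncated at $m\le v_0/2$ and must be bounded differently. Obtaining the uniform estimate $\exp(O(\Delta^2 r(F')/(\sqrt{n}\,p)))$ therefore requires a case split, and this is precisely what the paper (following Riordan) does, splitting at $v_0=n-\sqrt n$. When $v_0>n-\sqrt n$ the error is a global $1+o(1)$; when $v_0\le n-\sqrt n$ one uses that every non-trivial component of a good graph has order at least $3$ to get $r(F')\ge\tfrac{2}{3}\sqrt n$, so a cruder error of order $\exp(2\Delta^2/p)$ is absorbed by $\bigl(\nu/(1+n^{-1/2})\bigr)^{r(F')}\ge\exp(7\Delta^2/(3p))$. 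Without making this split explicit, the claimed uniform error bound is asserted rather than proved.
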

The proof mainly follows and slightly improves upon the corresponding result in~\cite{riordan2000}. We give the details in the appendix (Section~\ref{sec:lemma4.3_proof}). Applying Lemma~\ref{lem:lemma4.2} and Lemma~\ref{lem:lemma4.3} to upper bound $f$, it only remains to show that $S'_H\le 1+o(1)$.

We now start to significantly deviate from the proof in~\cite{riordan2000}, since we treat sparse and dense subgraphs $F \subset H$ differently.
For sparse good subgraphs, we use (a slightly enhanced version of) Lemma 4.4 in \cite{riordan2000}, the proof of which can be found in Section~\ref{sec:lemma4.4_proof}.
\begin{lemma}\label{lem:lemma4.4}
For good spanning subgraphs $F\subset H$, we have
$$\frac{X_F(H)}{X_F(K_n)}\le\left(\frac{3ke}{n}\right)^{r(F)}.$$
\end{lemma}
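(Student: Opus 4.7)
The plan is to use $X_F(H)/X_F(K_n) = N(F,H)/n!$, where $N(F,H)$ is the number of edge-preserving bijections $V(F)\to V(H)$; this holds because $X_F(G)=N(F,G)/|\aut(F)|$ for any host $G$ and $N(F,K_n)=n!$ for spanning $F$. Thus it suffices to bound $N(F,H)$.

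I would enumerate the non-trivial components of $F$ as $C_1,\dots,C_q$ with $|C_j|=s_j\ge 3$ (since $F$ is good), let $m$ be the number of isolated vertices, and build up a bijection $\phi$ component by component, each $C_j$ in BFS order from an arbitrarily chosen root. The root of $C_j$ must map to a previously unused vertex, giving at most $n-\sum_{j'<j}s_{j'}$ choices; each subsequent vertex of $C_j$ must map to a neighbor in $H$ of its parent's image, giving at most $\Delta=2k$ choices; and the isolated vertices then fill the remaining $m$ positions in at most $m!$ ways. Using $s_{j'}\ge 3$ to bound $\sum_{j'<j}s_{j'}\ge 3(j-1)$, this yields
\[
N(F,H)\;\le\;\Delta^{r(F)}\cdot m!\cdot \prod_{j=0}^{q-1}(n-3j).
\]

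To conclude, I would invoke the identity $n! = (n-3q)!\cdot\prod_{j=0}^{q-1}(n-3j)(n-3j-1)(n-3j-2)$ to cancel the factor $\prod_{j=0}^{q-1}(n-3j)$, reducing the lemma to
\[
\Delta^{r(F)}\cdot m!\;\le\;\left(\frac{3ke}{n}\right)^{r(F)}\cdot (n-3q)!\cdot \prod_{j=0}^{q-1}(n-3j-1)(n-3j-2).
\]
I would then take logarithms, apply Stirling's approximation to the factorials, and parametrize with $\sigma=m/n$ and $\rho=3q/n$, which reduces the claim (in the leading order in $n$) to the analytic inequality
\[
-\sigma\ln\sigma+\tfrac{1-\rho}{3}\ln(1-\rho)+(1-\rho/3-\sigma)\ln(3/2)\;\ge\;0
\]
on the compact region $\{(\sigma,\rho):0\le\sigma\le 1-\rho,\ 0\le\rho\le 1\}$, which encodes the feasibility constraint $s\ge 3q$. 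The main obstacle is verifying this analytic inequality with enough slack to swallow the lower-order Stirling corrections: I would check that the unique interior critical point is a saddle, so the minimum is attained on the boundary, and then analyze the four boundary segments, expecting the minimum to occur on $\sigma=0$ at $\rho=1-2/(3e)$ with a strictly positive value (numerically around $0.19$). This yields an $e^{\Omega(n)}$ multiplicative slack that dominates the Stirling error terms, closing the proof.
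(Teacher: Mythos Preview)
Your combinatorial setup is fine and is essentially Riordan's: the identity $X_F(H)/X_F(K_n)=Y_F(H)/n!$ and the BFS embedding bound
\[
Y_F(H)\le \Delta^{r(F)}\,m!\,\prod_{j=0}^{q-1}(n-3j)
\]
are exactly the right starting point, and the factorization of $n!$ you use is correct. The problem is in your final step. Your claimed analytic inequality
\[
g(\sigma,\rho)=-\sigma\ln\sigma+\tfrac{1-\rho}{3}\ln(1-\rho)+(1-\rho/3-\sigma)\ln(3/2)\ge 0
\]
does \emph{not} have a strictly positive minimum on the feasible triangle: at $(\sigma,\rho)=(1,0)$ (that is, $m=n$, $q=0$, $r(F)=0$) all three terms vanish, so $g(1,0)=0$. (Also, the unique stationary point $\sigma=2/(3e)$, $\rho=1-2/(3e)$ lies on the edge $\sigma+\rho=1$, not in the interior.) Near $(1,0)$ one has $g(\sigma,0)=(1+\ln(3/2))(1-\sigma)+O((1-\sigma)^2)$, so the slack in your inequality is only $e^{\Theta(r(F))}$, not $e^{\Omega(n)}$; when $r(F)=O(1)$ this does not swallow the $O(\ln n)$ Stirling and integral-approximation errors. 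Your boundary analysis simply missed the segment $\rho=0$ containing the vertex $(1,0)$.

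The clean way to finish, and the way the paper (via Riordan) uses the bound $q\le n/3$, is to avoid Stirling altogether. After your cancellation the right-hand side is exactly $\prod_{i\in S}i$ where $S=\{m+1,\dots,n-1\}\setminus\{n-3,n-6,\dots,n-3(q-1)\}$ is a set of $r(F)$ distinct integers in $\{m+1,\dots,n-1\}$. Hence
\[
\prod_{i\in S}i\;\ge\;\prod_{i=m+1}^{m+r(F)}i\;=\;[\,m+r(F)\,]_{r(F)}\;\ge\;\Bigl(\tfrac{m+r(F)}{e}\Bigr)^{r(F)}=\Bigl(\tfrac{n-q}{e}\Bigr)^{r(F)}\;\ge\;\Bigl(\tfrac{2n}{3e}\Bigr)^{r(F)},
\]
using the elementary bound $(x)_y\ge(x/e)^y$ and $q\le n/3$. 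This is precisely your target inequality, without any asymptotics.
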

On the other hand, for dense good subgraphs, we obtain a much stronger bound on $\frac{X_F(H)}{X_F(K_n)}$. This improvement over the bound in~\cite{riordan2000} is specific to powers of Hamilton cycles, and it is vital to our proof.

\begin{lemma}\label{lem:lemma4.4good}
Let $a>0$ be a sufficiently large constant. 
If  $F\subset H$ is a $\zeta'$-dense good spanning subgraph for some $\zeta' = o(1)$ with $\zeta' = \omega(1/\ln{n})$, we have
$$\frac{X_F(H)}{X_F(K_n)}\le\left(\frac{ea^{\sqrt{\zeta'}}}{(1-a\sqrt{\zeta'})n}\right)^{r(F)}.$$
\end{lemma}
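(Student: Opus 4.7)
The plan is to bound directly the number of edge-preserving bijections $\sigma : [n] \to [n]$ from $F$ into $H$, since $X_F(H)/X_F(K_n)$ equals this count divided by $n!$. The proof of Lemma~\ref{lem:lemma4.4} estimates this count crudely by allowing $\Delta = 2k$ choices for every vertex placed after the first in its component. To obtain the much stronger bound here, one has to shrink the per-vertex factor from $\Delta$ to essentially $a^{\sqrt{\zeta'}}/(1-a\sqrt{\zeta'})$, which should be possible by exploiting the rigidity of $H$ as the $k$-th power of a Hamilton cycle: for a dense $F$, each newly placed vertex is forced to be adjacent to many already placed ones, and their $H$-neighborhoods have a very small joint intersection.

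I would proceed component-by-component. For each component $C$ of $F^\circ$, parametrize embeddings $\sigma|_{V(C)}$ by the leftmost position of $\sigma(V(C))$ in cyclic order (a factor of $n$) and an orientation (a factor of $2$), and then enumerate the remaining vertices of $C$ along the cyclic order of their images. The key structural observation is that at each subsequent step, the new vertex must lie in the intersection of the $H$-neighborhoods of its already-placed $F$-neighbors. In the $k$-th power of a cycle, the size of this intersection is controlled simultaneously by the number $d$ of such neighbors and by the spread of their positions on the cycle; for clustered positions, which is the scenario forced when $C$ is close to its completion $H[V(C)]$, the intersection shrinks rapidly with $d$. This should yield a per-component bound of the form
\[
\#\{\text{embeddings of } C \text{ into } H\} \le 2n \cdot \lambda_C^{|C|-1},
\]
where $\lambda_C$ measures the closeness of $C$ to $H[V(C)]$; at the extreme $C = H[V(C)]$ with $V(C)$ an arc, $\lambda_C = 1$ and we exactly recover the $2n$ automorphism-based embeddings.

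Next, I would aggregate using the density hypothesis. Since $\gamma(F) = \sum_C \mu_F(C) \gamma(C)$ is a weighted average and $\gamma(F) \ge \gamma(n) - \zeta'$, most components are on weighted average close to the maximal density, but some may carry larger defects. To produce the $\sqrt{\zeta'}$ exponent I would split the components at a defect threshold of order $\sqrt{\zeta'}$: apply the sharp structural bound above to the ``very dense'' components, use the coarser Lemma~\ref{lem:lemma4.4}-style bound (per-vertex factor up to $\Delta$) for the rest, and combine via a Cauchy--Schwarz-type inequality, using that the total $\mu_F$-weight of less-dense components is $O(\sqrt{\zeta'})$. The resulting bound should take the form $\prod_C \lambda_C^{|C|-1} \le \big(a^{\sqrt{\zeta'}}/(1 - a\sqrt{\zeta'})\big)^{r(F)}$ for sufficiently large constant $a$.

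Finally, combining the starter factors of $n$ per component (including the isolated vertices of $F$) with the per-vertex factor and dividing by $n!$ via Stirling's bound $n! \ge (n/e)^n$ yields the claimed inequality. I expect the main obstacle to be the sharp per-component estimate: tracking the intersection sizes in the $k$-th power structure requires a careful convexity argument on the per-step choices, and a classification of the possible ``shapes'' of dense $C$ inside its ambient arc. The split at $\sqrt{\zeta'}$ also has to be calibrated carefully so that the contribution of less-dense components does not dominate; the assumption $\zeta' = \omega(1/\ln n)$ is likely used here to keep the correction factor $a^{\sqrt{\zeta'}}/(1-a\sqrt{\zeta'})$ only mildly larger than $1$.
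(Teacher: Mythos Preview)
Your overall architecture is right --- reduce to counting embeddings, exploit rigidity forced by density, aggregate over components --- but the final assembly step fails as written, and the core rigidity bound is obtained in the paper by a different and cleaner device than the vertex-by-vertex scheme you outline.

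\textbf{The isolated-vertex gap.} Your last step assigns every component, including each isolated vertex, a starter factor of $n$ and then divides by $n!\ge(n/e)^n$. This gives
\[
\frac{Y_F(H)}{n!}\;\le\;\frac{n^{c(F)}\lambda^{r(F)}}{(n/e)^n}
\;=\;e^{c(F)}\Big(\frac{e\lambda}{n}\Big)^{r(F)},
\]
and the stray factor $e^{c(F)}$ is fatal: a $\zeta'$-dense good $F$ may have arbitrarily many isolated vertices (density constrains only $F^\circ$). Concretely, take $F^\circ=H_L$ with $L=\Theta(1/\zeta')$ and $i=n-L$ isolated vertices; then $r(F)=L-1$ while $c(F)=n-L+1$, so $e^{c(F)}$ is of order $e^n$ and swamps the target $(e/n)^{r(F)}(1+o(1))^{r(F)}$. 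The paper avoids this by embedding $F^\circ$ first and placing the $i$ isolated vertices afterwards in exactly $i!$ ways, so that $Y_F(H)/n!=Y_{F^\circ}(H)/[n]_{|F^\circ|}$; the required $n$ in the denominator then comes from the falling factorial $[n-c(F^\circ)]_{r(F^\circ)}\ge\big((n-c(F^\circ))/e\big)^{r(F^\circ)}$, not from Stirling applied to $n!$.

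\textbf{The rigidity step.} Your proposed bound $\#\{\text{embeddings of }C\}\le 2n\,\lambda_C^{|C|-1}$ via shrinking neighbourhood intersections is plausible but, as you note, the per-step convexity and shape classification are genuine obstacles. The paper sidesteps this with a structural tool: a \emph{consistent partition} of $V(F^\circ)$ into blocks each inducing some $H_s$ (a singleton or $s\ge 2k+1$). Since $\operatorname{aut}(H_s)=2$ in that range, once one anchor per component is placed there are at most $4k$ choices per block, giving $Y_{F^\circ}(H)\le[n]_{c(F^\circ)}(4k)^{|\mathcal D|}$. Density enters only through edge counting: $F^\circ$ arises from the $k$-th power of a cycle on $V(F^\circ)$ by deleting at most $\zeta' r(F^\circ)+O(c(F^\circ))$ edges, each deletion splitting a block into at most $4k$ pieces, so $|\mathcal D|\le b\sqrt{\zeta'}\,r(F^\circ)$ once one knows $c(F^\circ)=O(\sqrt{\zeta'})|F^\circ|$. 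Thus the $\sqrt{\zeta'}$ split in the paper is on component \emph{size} (threshold $B=\Theta(1/\sqrt{\zeta'})$), feeding both the $a^{\sqrt{\zeta'}}$ numerator (via $|\mathcal D|$) and the $(1-a\sqrt{\zeta'})$ denominator (via $c(F^\circ)$), rather than on component density with a Cauchy--Schwarz combination as you propose.
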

We prove Lemma~\ref{lem:lemma4.4good} in Section~\ref{sec:lemma4.4good_proof}.
There are two crucial properties of dense subgraphs that facilitate the improvement in Lemma~\ref{lem:lemma4.4good} compared to Lemma~\ref{lem:lemma4.4}. For one, since the subgraph is dense, most vertices must have full degree $2k$ which in turn significantly restricts the number of possible embeddings into $H$. Additionally, since small subgraphs (that is, components on few vertices) are sparse, dense subgraphs cannot have too many small components and thus not too many components overall.

In order to motivate what follows, we now give an overview of the remainder of the proof. The key idea is the following: Depending on the density of $F$, we apply Lemma~\ref{lem:lemma4.4} or~\ref{lem:lemma4.4good} in order to bound $X_F(H)/X_F(K_n)$, which in turn provides the required bound on $\phi(F)$ and consequently also on $S_H'$ as defined in \eqref{eq:S'def}. 
At first glance, one might think that with this in mind, following~\cite{riordan2000} would easily give our result. However, dealing with $S_H'$ is non-trivial and presents some serious challenges. In \cite{riordan2000}, this is achieved using a bound similar to Lemma~\ref{lem:lemma4.4}, which can be factorized over the components of $F$ (as both $r(F)$ and $e(F)$ are additive over them) and thus every component can be considered separately. Then the partial sums over all connected subgraphs on the same vertex set are evaluated, before establishing an upper bound on $S_H'$.

In our case, there is no simple way to create a factorization over the components, where every component contributes in the same manner to every subgraph it is contained in. We require different upper bounds on $X_F(H)/X_F(K_n)$ depending on the density of $F$, and since the same component can appear in subgraphs with a wide range of densities, any such factorization should depend on the whole of $F$, not just an individual component. One might think that this issue could be circumvented by differentiating based on the density of each component instead of the density of the entire subgraph. However, with this approach, one loses control over the number of components of a subgraph, which is crucial for the argument.

In order to overcome both challenges --- factorizing the sum and bounding the number of components --- we change the order of the factorization step and the step that groups the subgraphs by their vertex set. In \cite{riordan2000} this reversal would correspond to taking the partial sums over all subgraphs on the same vertex set with the same component structure, that is, graphs with the same completion, as defined in Section \ref{sec:notions_notation}.

Our next goal is to determine upper bounds on the partial sums, which can be factorized, considering sparse and dense completions separately. If the completion is sparse, then we can use Lemma~\ref{lem:lemma4.4} as the upper bound for $X_F(H)/X_F(K_n)$ in every summand. On the other hand, for a dense completion we establish an upper bound which roughly corresponds to using Lemma~\ref{lem:lemma4.4good} as an upper bound on $X_F(H)/X_F(K_n)$ in every summand.
However, this can only be achieved as even the weak bound Lemma~\ref{lem:lemma4.4} for sparse graphs with this dense completion gives an insignificant contribution. This then allows for factorization and the proof is completed by summing separately over the sparse and the dense completions.

We now return to the technical details of the proof. As mentioned above, we split $S'_H$ into more than two parts, based not only on the density of the individual $F$, but also on the density of their \emph{completions} $F^*$. More precisely, we split the sum $S'_H$ into
\begin{align}\label{eq:sprimesplit}
S'_H=1+S_{\mathrm s}+S_{\mathrm{ds}}+S_{\mathrm{dd}},
\end{align}
where $1$ accounts for the empty graph. For any other good graph $F$, we differentiate based on whether $F$ is completion-sparse and its \emph{completion} $F^*$ is $\zeta$-sparse. The contribution $S_{\mathrm s}$ is the sum of $\phi(F)$ over all good subgraphs $F$ with $\zeta$-sparse completions $F^*$. The contribution $S_{\mathrm{ds}}$ is the sum of $\phi(F)$ over all completion-sparse good subgraphs $F$ with $\zeta$-dense completions $F^*$, and the last contribution $S_{\mathrm{dd}}$ is the sum of $\phi(F)$ over all completion-dense subgraphs $F$ with $\zeta$-dense completions $F^*$. We will show that the latter three terms are in $o(1)$, employing a different argument in each case. 

In case of $S_{\mathrm{s}}$, we consider graphs with sparse completions. The lower densities allow us to compensate for the weaker bound on $X_F(H)/X_F(K_n)$ provided by Lemma~\ref{lem:lemma4.4}. We prove the following result in Section~\ref{sec:sparse_completions}.
\begin{proposition}\label{prop:sparse_completions}
We have $S_{\mathrm s}=o(1)$.
\end{proposition}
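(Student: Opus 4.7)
The plan is to use Lemma~\ref{lem:lemma4.4} uniformly on every good $F$ with $\zeta$-sparse completion, and then regroup $S_{\mathrm s}$ by completion $F^*$, as foreshadowed in the overview of the proof strategy. Since $r(F) = r(F^*)$, Lemma~\ref{lem:lemma4.4} yields $\phi(F) \leq (p^{-1}-1)^{e(F)} (3ke\nu/n)^{r(F^*)}$. Grouping by completion and then crucially upper-bounding the inner sum by dropping the constraint that $F$ is connected on each component of $F^*$, I would use the trivial identity
\[
\sum_{F \subset F^*,\, V(F) = [n]} (p^{-1}-1)^{e(F)} \;=\; \prod_{e \in E(F^*)} \bigl((p^{-1}-1) + 1\bigr) \;=\; p^{-e(F^*)},
\]
which gives the clean bound $S_{\mathrm s} \leq \sum_{F^*\text{ good},\, \zeta\text{-sparse}} (3ke\nu/n)^{r(F^*)} p^{-e(F^*)}$. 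Note that when $F^*$ is good, any $F$ with completion $F^*$ is automatically good, so no extra care is needed.

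To exploit $\zeta$-sparseness, I would use that any good $\zeta$-sparse $F^*$ has $e(F^*) > 0$ and therefore $e(F^*) < (\gamma(n) - \zeta)\, m(F^*)$ with $m(F^*) = r(F^*) - c(F^{*\circ})$. Thus $p^{-e(F^*)} \leq p^{-(\gamma(n)-\zeta) m(F^*)}$, and writing $r(F^*) = m(F^*) + c(F^{*\circ})$ before extending the sum to all good $F^*$ (justified since the summands are nonnegative) gives
\[
S_{\mathrm s} \;\leq\; \sum_{F^*\text{ good}} \Bigl(\tfrac{3ke\nu}{n}\Bigr)^{c(F^{*\circ})} q^{m(F^*)}, \qquad q \;:=\; \tfrac{3ke\nu}{n}\, p^{-(\gamma(n)-\zeta)}.
\]
A direct computation using $\gamma(n) = kn/(n-2) = k + O(1/n)$ and $p^{-k} = n/(e(1+\varepsilon)^k)$ gives $p^{-\gamma(n)} = (n/(e(1+\varepsilon)^k))(1+o(1))$. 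Since $\zeta = \omega(1/\ln n)$, one has $p^{\zeta} = n^{-\zeta/k}(1+o(1)) = o(1)$, and hence $q = (3k/(1+\varepsilon)^k)\, n^{-\zeta/k}(1+o(1)) = o(1)$.

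The final step is enumerating good $F^*$ via their non-trivial components. Each $F^*$ with $c(F^{*\circ}) \geq 1$ corresponds to an unordered collection of disjoint vertex sets $V_1,\ldots,V_t \subset [n]$, each of size $\geq 3$ and inducing a connected subgraph of $H$; its contribution factorizes as $\prod_i w(V_i)$ with $w(V) := (3ke\nu/n)\, q^{|V|-2}$. Dropping the disjointness constraint (again, nonnegative summands) gives
\[
\sum_{F^*\text{ good, non-empty}} \Bigl(\tfrac{3ke\nu}{n}\Bigr)^{c(F^{*\circ})} q^{m(F^*)} \;\leq\; \sum_{t \geq 1} \frac{1}{t!}\Bigl(\sum_V w(V)\Bigr)^{t} \;=\; \exp\!\Bigl(\sum_V w(V)\Bigr) - 1.
\]
Using the standard bound that in the $2k$-regular graph $H$ the number of connected vertex sets of size $s$ is at most $n\,(2ke)^{s-1}$, the inner sum telescopes as a geometric series:
\[
\sum_V w(V) \;\leq\; 3ke\nu \sum_{s \geq 3} (2ke)^{s-1} q^{s-2} \;=\; \frac{6k^2 e^2 \nu \cdot 2keq}{1 - 2keq} \;=\; O(q) \;=\; o(1),
\]
and therefore $S_{\mathrm s} \leq \exp(O(q)) - 1 = O(q) = o(1)$. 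The main delicate point is the factorization: the completion grouping combined with $\sum_{F \subset F^*}(p^{-1}-1)^{e(F)} = p^{-e(F^*)}$ makes the $\zeta$-sparseness of $F^*$ extractable as a uniform multiplicative factor $p^{\zeta} = n^{-\zeta/k} = o(1)$ per unit of $m(F^*)$, which is exactly what shrinks the per-component weight $q$ below the threshold needed for the enumeration over connected subgraphs of $H$ to converge to $o(1)$.
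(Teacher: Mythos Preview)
Your proof is correct and follows essentially the same route as the paper: group by completion, apply Lemma~\ref{lem:lemma4.4}, collapse the inner sum via the binomial identity to $p^{-e(F^*)}$, use $\zeta$-sparseness to replace $e(F^*)$ by $(\gamma(n)-\zeta)(r(F^*)-c(F^{*\circ}))$, factorize over connected induced components, and bound the resulting exponential sum as a geometric series in $q = o(1)$. The only cosmetic differences are that the paper phrases the per-completion bound as a single function $\varphi(F)$ and uses the sharper count $nk^{s-1}$ for connected induced subgraphs of $H$ (exploiting the cyclic structure) rather than your generic $n(2ke)^{s-1}$; neither affects the argument.
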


The more interesting case occurs when we are dealing with good graphs that have a dense completion. As mentioned above, this roughly corresponds to using the stronger upper bound on $X_F(H)/X_F(K_n)$ found in Lemma~\ref{lem:lemma4.4good} in every summand. As a first step, we show that this is, in fact, correct. The proof can be found in Section~\ref{sec:dense_completions_sparse}.
\begin{proposition}\label{prop:dense_completions_sparse}
Let $a$ be as in Lemma~\ref{lem:lemma4.4good} and
$$\bar{S}_{\mathrm d} \coloneqq \sum_{\substack{F \in \mathcal{F}^* \\ F \textup{ is } \zeta\textup{-dense}}} p^{-e(F)}\left(\frac{ea^{\sqrt{\zeta'}}}{(1-a\sqrt{\zeta'})n}\right)^{r(F)},$$ 
where $\zeta' = (1+\gamma(n)-\zeta)\zeta$ and $\mathcal{F}^*$ denotes the set of all good completions in $H$.

Then $S_{\mathrm{ds}} + S_{\mathrm{dd}}\le (1+o(1))\bar{S}_{\mathrm{d}}$.
\end{proposition}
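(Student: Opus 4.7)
The plan is to regroup $S_{\mathrm{ds}}+S_{\mathrm{dd}}$ by the completion $G=F^*$ and, for each $\zeta$-dense completion $G$, bound the inner sum $\sum_{F:F^*=G}\phi(F)$ by the corresponding term of $\bar S_{\mathrm d}$ up to $(1+o(1))$. Two structural observations enable this. First, every $F$ with $F^*=G$ shares the component partition of $G$, so $r(F)=r(G)$ and $\nu^{r(F)}$ is constant across the inner sum. Second, such $F$ correspond bijectively to choices of a connected spanning subgraph of each component $C\in C(G)$, so the sum of $(p^{-1}-1)^{e(F)}$ factorizes over the components of $G$, and relaxing the connectedness constraint gives
\[\sum_{F:F^*=G}(p^{-1}-1)^{e(F)}\;\le\;\prod_{C\in C(G)}\bigl(1+(p^{-1}-1)\bigr)^{e(C)}\;=\;p^{-e(G)}.\]

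For completion-dense $F$ with $F^*=G$, the $\zeta$-density of $G$ combined with $\gamma(F)\ge(1-\zeta)\gamma(G)$ gives $\gamma(F)\ge(1-\zeta)(\gamma(n)-\zeta)=\gamma(n)-\zeta'$, so Lemma~\ref{lem:lemma4.4good} applies. Combining this with the factorization bound above, and absorbing $\nu^{r(G)}$ into the dense bound by invoking Lemma~\ref{lem:lemma4.4good} with a slightly smaller constant in place of $a$ (possible since $\nu-1=O(1/(\sqrt n\, p))=o(\sqrt{\zeta'})$ under our choice of $\zeta$), we obtain
\[\sum_{F:\,F^*=G,\;F\text{ comp-dense}}\phi(F)\;\le\;p^{-e(G)}\left(\frac{ea^{\sqrt{\zeta'}}}{(1-a\sqrt{\zeta'})n}\right)^{r(G)},\]
which is exactly the $G$-term of $\bar S_{\mathrm d}$. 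Summing over all $\zeta$-dense $G$ thus recovers $\bar S_{\mathrm d}$ as an upper bound on $S_{\mathrm{dd}}$.

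For completion-sparse $F$, Lemma~\ref{lem:lemma4.4good} is not available and we use the weaker bound $(3ke/n)^{r(G)}$ from Lemma~\ref{lem:lemma4.4}. The compensating observation is that completion-sparse means $\gamma(F)<(1-\zeta)\gamma(G)$, which, combined with the identity $r(F^\circ)-c(F^\circ)=r(G^\circ)-c(G^\circ)$ (a quantity depending only on $G$), forces $e(F)<(1-\zeta)e(G)$. Hence $(p^{-1}-1)^{e(F)}\le p^{-(1-\zeta)e(G)}$ for every such $F$, and a crude sum over the at most $2^{e(G)}$ such $F$ bounds the completion-sparse contribution for $G$ by essentially $(3k)^{r(G)}(2p^{\zeta})^{e(G)}$ times the completion-dense one. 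Since $\zeta$-density of $G$ forces $e(G)\ge(\gamma(n)-\zeta)(r(G)-c(G^\circ))$, and $p=O(n^{-1/k})$ with $\zeta\ln n\to\infty$, the factor $(2p^{\zeta})^{e(G)}$ decays superpolynomially in $r(G)$ and dominates $(3k)^{r(G)}$. Verifying this domination uniformly over all $\zeta$-dense $G$ is the main obstacle: one must carefully use the density deficit $\zeta\, e(G)$ to supply enough powers of $p$ to absorb both the $2^{e(G)}$ combinatorial blow-up and the $(3k)^{r(G)}$ loss from the weaker Lemma~\ref{lem:lemma4.4} bound, for every completion $G$ at once.
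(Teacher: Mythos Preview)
Your overall architecture matches the paper's: regroup by the completion $G$, use Lemma~\ref{lem:lemma4.4good} for completion-dense $F$ and Lemma~\ref{lem:lemma4.4} for completion-sparse $F$, and observe that extending the completion-dense inner sum to all spanning subgraphs of $G$ and applying the binomial theorem gives exactly $p^{-e(G)}$. The handling of $\nu$ is also the same in spirit (the paper simply folds $\nu$ into $B_{\mathrm d}$, which amounts to enlarging $a$; your remark that $\nu-1=o(\sqrt{\zeta'})$ is the right justification, though you mean a slightly \emph{larger} constant, not smaller).

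Where you diverge is in the completion-sparse estimate. The paper does not use the crude count $2^{e(G)}$ together with the pointwise bound $(p^{-1}-1)^{e(F)}\le p^{-(1-\zeta)e(G)}$; instead it keeps the full sum, pulls out $p^{-e(G)}$, and recognises
\[
\sum_{m\le(1-\zeta)e(G)}\binom{e(G)}{m}p^{e(G)-m}(1-p)^m \;=\;\mathbb P\bigl(X\ge\zeta\,e(G)\bigr),\qquad X\sim\mathrm{Bin}(e(G),p),
\]
and then applies a Chernoff bound to get $\exp(-\omega(1)\,e(G))$. Since $e(G)\ge|G^\circ|/2\ge r(G)/2$, this kills the $(B_{\mathrm s}/B_{\mathrm d})^{r(G)}=(3k)^{r(G)}(1+o(1))^{r(G)}$ prefactor immediately and uniformly, with no appeal to $\zeta$-density of $G$ beyond the split itself. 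Your route also works: the ``main obstacle'' you flag dissolves once you note that $e(G)\ge r(G)$ (each component of $G^\circ$ is a connected induced subgraph of $H$ on at least three vertices), so that
\[
(2p^{\zeta})^{e(G)}(3k)^{r(G)}\;\le\;(6k\,p^{\zeta})^{r(G)},
\]
and $p^{\zeta}=n^{-\zeta/k+o(\zeta)}\to 0$ because $\zeta\ln n\to\infty$. Thus the base tends to $0$ and $r(G)\ge 2$ suffices for uniformity; the finer bound $e(G)\ge(\gamma(n)-\zeta)(r(G)-c(G^\circ))$ is not needed here. The paper's Chernoff argument is cleaner and more self-contained, while yours is more elementary and avoids any probabilistic tail bound.
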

This allows us to limit our attention to $\bar{S}_{\mathrm{d}}$, which we prove to be small in Section~\ref{sec:dense_completions_dense_dense}.
\begin{proposition}\label{prop:dense_completions_dense}
We have $\bar{S}_{\mathrm{d}}=o(1)$.
\end{proposition}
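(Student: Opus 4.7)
The plan is to exploit the factorization of the summand in $\bar S_{\mathrm d}$ over the non-trivial components of a good completion. Each $F \in \mathcal F^*$ is uniquely determined by its collection $V_1,\ldots,V_c \subseteq [n]$ of pairwise disjoint non-trivial components (each with $|V_i| \ge 3$ and $H[V_i]$ connected); since both $e(F)=\sum_i e(H[V_i])$ and $r(F)=\sum_i(|V_i|-1)$ are additive, the summand factorizes as $\prod_{i=1}^c t(V_i)$, where
\[
t(V):=p^{-e(H[V])}\left(\frac{e\,a^{\sqrt{\zeta'}}}{(1-a\sqrt{\zeta'})n}\right)^{|V|-1}.
\]
Dropping the $\zeta$-density restriction and the disjointness of components (both only enlarge the sum, since all $t(V)\ge 0$) yields
\[
\bar S_{\mathrm d}\;\le\;\sum_{c\ge 1}\frac{1}{c!}\left(\sum_V t(V)\right)^{c}\;=\;\exp\!\left(\sum_V t(V)\right)-1,
\]
with the inner sum ranging over all connected $V\subseteq [n]$ with $|V|\ge 3$. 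It therefore suffices to establish $\sum_V t(V)=o(1)$.

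For this, I would split by $s:=|V|$ and write $t(V)=t(P_s)\cdot p^{\Delta(V)}$, where $\Delta(V):=e(P_s)-e(H[V])\ge 0$ is the edge deficit of $V$ relative to $P_s$, the densest connected subgraph of $H$ on $s$ vertices; note $\Delta(V)=0$ precisely when $V$ is a contiguous cyclic arc. A direct calculation using $p=(1+\varepsilon)(e/n)^{1/k}$ gives $n\,t(P_s)=O(n^{(3-k)/2})\,\tau^s$ for some constant $\tau<1$, hence $\sum_{s\ge 3}n\,t(P_s)=O(n^{(3-k)/2})=o(1)$ whenever $k\ge 4$. It therefore remains to establish the uniform counting bound $\sum_{V:\,|V|=s,\,H[V]\text{ conn.}} p^{\Delta(V)}\le C\,n$ for a constant $C=C(k)$ independent of $s$. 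Parameterizing $V\ne [n]$ by its leftmost vertex (at most $n$ choices) and its cyclic-gap sequence $g=(g_1,\ldots,g_{s-1})$, and handling the case in which exactly one $g_i>k$ separately via the same method applied to the two induced sub-arcs, this reduces to showing $\sum_{g\in[1,k]^{s-1}} p^{\Delta(g)}=O(1)$ uniformly in $s$.

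The hard part is this last estimate: with $k^{s-1}$ gap sequences, even $p^{\Delta(g)}\le 1$ is useless for $s\gg\log n$. The approach will be a polymer/cluster expansion. Setting $y_l:=g_l-1$ and $L:=\{l:y_l\ge 1\}$, the key lower bound is
\[
\Delta(g)\;\ge\;Z(L)\;:=\;\Bigl|\bigcup_{l\in L}[l-k+1,l]\cap[1,s-k]\Bigr|,
\]
which holds because every vertex pair at exact linear distance $k$ on the contiguous arc is pushed beyond distance $k$ as soon as any $y_l$ in its window $[j,j+k-1]$ becomes positive. Since each $L$ corresponds to $(k-1)^{|L|}$ positive value assignments, $\sum_g p^{\Delta(g)}\le\sum_L(k-1)^{|L|}p^{Z(L)}$. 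I would then decompose $L$ into maximal clusters of consecutive positions with internal gaps $<k$ (automatically separated by distance $\ge k$), so that a cluster $C$ of size $m$ and span $\sigma$ contributes weight $(k-1)^m p^{k+\sigma}$. A geometric series computation shows the total cluster weight per starting position is $\bar w=O(p^k)$, and the standard polymer-expansion bound then yields $\sum_L(k-1)^{|L|}p^{Z(L)}\le\exp((s-1)\bar w)$. Since $p^k=\Theta(1/n)$, this exponent is $O(1)$ uniformly for $s\le n$, completing the sub-claim. Combining everything gives $\bar S_{\mathrm d}\le\exp(o(1))-1=o(1)$, as desired.
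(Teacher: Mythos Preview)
Your approach is essentially correct and genuinely different from the paper's. Both proofs begin with the same factorization step, bounding $\bar S_{\mathrm d}\le\exp(T)-1$ with $T=\sum_C p^{-e(C)}B_{\mathrm d}^{|C|-1}$ over connected induced subgraphs $C$ of order at least $3$. The divergence is in how $T$ is shown to be $o(1)$.

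The paper splits $T=T_{\mathrm s}+T_{\mathrm d}$ according to whether $C$ is $\zeta$-sparse or $\zeta$-dense. For $T_{\mathrm s}$ it uses the crude count $nk^{s-1}$ for connected induced subgraphs together with the density saving $e(C)\le(\gamma(n)-\zeta)(|C|-2)$; the factor $p^{\zeta(|C|-2)}$ beats $k^{|C|}$. For $T_{\mathrm d}$ it invokes Lemma~\ref{lem:num_dense_completions}, which gives only $n\eta^s$ dense connected subgraphs with $\eta=1+o(1)$ (this lemma in turn rests on the consistent-partition machinery of Section~\ref{sec:dense_graphs_properties}), together with $|C|\ge L=\Theta(1/\zeta)$ from Lemma~\ref{lem:dense_comp_cnt}. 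Thus the paper's argument is short here precisely because it recycles structural lemmas proved earlier for Lemma~\ref{lem:lemma4.4good}.

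You bypass both the $\zeta$-threshold and Lemma~\ref{lem:num_dense_completions} entirely: writing $t(V)=t(P_s)p^{\Delta(V)}$ and establishing the weighted count $\sum_{|V|=s}p^{\Delta(V)}\le Cn$ directly via a polymer expansion on the gap sequence. The payoff is a self-contained argument for this proposition that does not rely on the dense-subgraph structure theory, and a cleaner quantitative bound $T=O(n^{(3-k)/2})$. The cost is that the cluster expansion, while standard, needs more bookkeeping than you indicate: the boundary positions $l\in[1,k-1]\cup[s-k+1,s-1]$ have $Z(\{l\})<k$, so the per-position weight there is $O(p^{\min(l,s-l)})$ rather than $O(p^k)$ --- this still sums to $o(1)$ over the $O(k)$ boundary positions, but your uniform ``$\bar w=O(p^k)$'' statement glosses over it. Similarly, the parameterization by ``leftmost vertex'' and the ``one $g_i>k$'' case are slightly muddled: the clean reduction is to rotate so the unique large cyclic gap (which exists for connected $V\neq[n]$) sits at the wraparound, giving $g\in[1,k]^{s-1}$ directly; the finitely many sizes $s>n-k$ where $\Delta(V)$ can be negative are then disposed of by the trivial bound $\binom{n}{n-s}\le n^k$ against the exponentially small $t(P_s)$. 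These are all routine fixes, not genuine obstructions.
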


We are now ready to give the proof of our main theorem for $G(n,pN)$.
\begin{proof}[Proof of Theorem~\ref{thm:main_uniform}]
Recall that we fixed $p=(1+\varepsilon)(e/n)^{1/k}$. It suffices to prove that $G(n,pN)$ whp contains a copy of $H$. Monotonicity of subgraph containment would then imply the statement of the theorem for $G(n,p'N)$ with $p'>p$. Lemma~\ref{lem:lemma4.2} implies that
$$ f \leq \big(1+o(1)\big) \exp\left(- \frac{1-p}{p}\alpha^2N\right) S_{H}.$$
By Lemma~\ref{lem:lemma4.3},
$$ S_H\le(1+o(1))\exp\left(\frac{1-p}{p}\alpha^2 N\right)S'_H,$$
so $f \leq \left(1+o(1)\right)S'_H$.
Propositions~\ref{prop:sparse_completions} to~\ref{prop:dense_completions_dense} and \eqref{eq:sprimesplit} establish that $S'_H=1+o(1)$, so by~(\ref{eq:secondMoment}) and the fact that $f = \frac{\mathrm{Var}(X)}{\mu^2} + 1$, we have
$$ \mathbb{P}(X=0) \leq f-1 = o(1),$$
where $X$ is the number of copies of $H$ in $G(n,pN)$.
\end{proof}

\subsection{Overview of the rest of the paper}\label{sec:overview}
In Section~\ref{sec:dense_subgraphs}, we discuss the subgraphs $H_s$ (and $P_s$), show that they are maximizers for $\gamma(s)$ and thus obtain a complete description of the map $s\mapsto\gamma(s)$. Importantly, we show that the density of these edge-maximal subgraphs increases with $s$. 
We then use these results to establish properties of dense subgraphs $F\subset H$, specifically that they have few small components, few components in total and many full-degree vertices. Finally, we use these observations to establish Lemma~\ref{lem:lemma4.4good}.

We prove Proposition~\ref{prop:sparse_completions} in Section~\ref{sec:sparse_completions} by rearranging Riordan's proof to facilitate the treatment of disconnected graphs. This allows us to eliminate subgraphs with sparse completions.

Finally, we show Propositions~\ref{prop:dense_completions_sparse} and~\ref{prop:dense_completions_dense} in Section~\ref{sec:dense_completions} by upper bounding the relevant sum in terms of some tail bounds for binomial random variables.

\section{Dense Subgraphs}\label{sec:dense_subgraphs}

Recall Section~\ref{sec:notions_notation}. In this section we analyze dense subgraphs $F\subset H$ in detail. First, in Section~\ref{sec:densest_subgraphs} we show that $H_s$ is the densest subgraph on $s$ vertices, determine $e(H_s)$ and consequently $\gamma(s)$. Then we show that $s\mapsto\gamma(s)$ is increasing and thus $\gamma(n)=\bar{\gamma}(H)$ (cf.~\eqref{eq:gammabar}) is maximal, which thus coincides with $\gamma$ in \cite{riordan2000}.
In Section~\ref{sec:dense_graphs_properties}, we consider good dense subgraphs.
We show that these have few small components, few components in general and that almost all vertices have full degree. In Section~\ref{sec:lemma4.4good_proof} we then use these observations to establish Lemma~\ref{lem:lemma4.4good}. 
\subsection{Densest Subgraphs}\label{sec:densest_subgraphs}
We start by showing that the graphs $H_s$ are edge-maximal.
\begin{lemma}\label{lem:Hs_edg_maximal}
For any $F\subset H$ we have $e(F)\le e(H_{|F|})$.
\end{lemma}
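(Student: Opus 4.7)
The plan is to reduce the statement to a comparison of \emph{induced} subgraphs and then establish an edge-isoperimetric inequality for the $k$-th power of the cycle. Since $F \subset H$ implies that every edge of $F$ is an edge of $H$ with both endpoints in $V(F)$, we have $e(F) \leq e(H[V(F)])$. So it suffices to show $e(H[S]) \leq e(H_s)$ for any $S \subseteq [n]$ with $|S| = s$.

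The key identity I would use is $e(H[S]) = ks - F(S, T)$, where $T = [n] \setminus S$ and
\[
F(S, T) := \sum_{v \in S} |\{i \in [k]: v + i \in T\}|
\]
(arithmetic cyclic modulo $n$). This holds because each $v \in S$ contributes its $k$ cyclic forward neighbors $v + 1, \ldots, v + k$, each of which is either in $S$ (contributing to $e(H[S])$) or in $T$ (contributing to $F(S, T)$). A direct count for the arc $S = [s]$ (assuming $k + 1 \leq s \leq n - k$ to avoid wraparound) shows that $F([s], [s+1, n]) = \sum_{i=1}^k i = \binom{k+1}{2}$, yielding $e(H_s) = ks - \binom{k+1}{2}$. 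Thus the claim reduces to the edge-isoperimetric bound $F(S, T) \geq \binom{k+1}{2}$ for $0 < s < n$.

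To prove this, I would decompose $S$ into maximal cyclic runs $R_1, \ldots, R_m$ of lengths $\ell_1, \ldots, \ell_m$ separated by maximal $T$-gaps $G_1, \ldots, G_m$ of lengths $g_1, \ldots, g_m$ (with $G_j$ immediately following $R_j$). For each pair $(R_j, G_j)$, the vertex that is $p$ positions from the end of $R_j$ (for $p = 0, 1, \ldots, \min(\ell_j, k) - 1$) contributes at least $\min(k - p, g_j)$ forward neighbors in $G_j \subseteq T$ to $F(S, T)$, yielding a total contribution of $\sum_{p=0}^{\min(\ell_j, k)-1}\min(k - p, g_j)$ to $F$. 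In the favorable case $\ell_j \geq k$ and $g_j \geq k$, this sum is exactly $\sum_{p=0}^{k-1}(k-p) = \binom{k+1}{2}$, and we are done.

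The main obstacle is the regime where every $S$-run and every $T$-gap has length less than $k$. In that case the individual run/gap contributions computed above fall short of $\binom{k+1}{2}$, but forward edges that skip across short gaps and runs compensate, and summing these over multiple adjacent run-gap pairs is delicate. I would handle this via a local compression argument: as long as two runs are separated by a gap of length $<k$, swapping a single vertex across the gap does not decrease $e(H[S])$ (equivalently, does not increase $F(S,T)$ past the threshold already achieved), while reducing the number of runs. Iterating eventually produces a configuration with at least one long run followed by a long gap, to which the basic argument applies. The edge regimes $s \leq k$ (trivially $e(H[S]) \leq \binom{s}{2} = e(H_s)$) and $s > n - k$ (by applying the symmetric bound to $T = [n]\setminus S$, using $e(H[S]) = e(H) - e(S,T) - e(H[T])$) complete the argument.
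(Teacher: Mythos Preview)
Your reformulation through the forward-boundary count $F(S,T)=\sum_{v\in S}|\{i\in[k]:v+i\in T\}|$ and the reduction to the edge-isoperimetric inequality $F(S,T)\ge\binom{k+1}{2}$ is a clean and genuinely different route from the paper's direct degree-comparison argument; the boundary regimes are also fine (for $s>n-k$ one checks $F(S,T)=k(n-s)-e(H[T])\ge k(n-s)-\binom{n-s}{2}$, which is exactly $ks-e(H_s)$). Where the proposal falls short is precisely at the crux, the compression step.

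First, your case split is incomplete. The favourable case (some $j$ with $\ell_j\ge k$ and $g_j\ge k$) together with the compressible case (some gap of length $<k$) does not cover the configuration in which every gap has length $\ge k$ but every run has length $<k$; this is easily patched by summing your per-pair lower bounds $\sum_{p=0}^{\ell_j-1}(k-p)$ over all $j$, but you have not addressed it. Second, and more seriously, the assertion that ``swapping a single vertex across the gap does not decrease $e(H[S])$ while reducing the number of runs'' is neither specified nor justified. A single swap need not reduce the number of runs (e.g.\ if $g_j=1$ and $|R_{j+1}|>1$ it merely shifts the gap one step), and whether such a move is edge-nondecreasing depends on the configuration on \emph{both} sides of the vertex being relocated. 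The paper confronts exactly this difficulty and resolves it with a carefully directed block shift: in its residual case one has $|T|\le k^3$, hence some run of length $\ge k$ exists; one then chooses the shift direction so that the side being approached contains at least as many $S$-vertices within distance $k$ as the side being left, and verifies the edge balance explicitly. Your sketch contains no such asymmetry condition, and without one the monotonicity you need is not automatic.
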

\begin{proof}
Let $F'$ be the subgraph on $V(F)$ induced by $H$, then clearly $e(F)\le e(F')$.
Let $V(F)=\{v_1,\ldots,v_{|F|}\}$, with $1\le v_1<\ldots<v_{|F|}\le n$. We call a set of consecutive vertices in the cyclic order from $V(H)\setminus V(F)$ a gap.
Note that if there is at most one gap, then $F'$ is a copy of $H_{|F|}$.

First assume that there is a gap of length at least $k$, and without loss of generality assume that this includes the vertices $\{n-k+1,\ldots, n\}$.
Then the statement follows by comparing the degrees of $v_i$ in $F'$ and $i$ in $H_{|F|}$, for each $i\in [|F|]$.

Now assume that there exist $j_1<\ldots<j_{k(k-1)/2}$ such that $v_{j_i}\in V(F)$ and $v_{j_i}-1\not\in V(F)$ for $1\le i \le k(k-1)/2$.
Without loss of generality, assume that $j_1>1$. Again, we will compare the degree of $v_i$ in $F'$ and $i$ in $H_{|F|}$. For $1\le i \le k(k-1)/2$, the vertex $j_i$ will have an edge connecting it to $j_i-1$ in $H_{|F|}$, while there is no edge connecting $v_{j_i}$ to $v_{j_i}-1$ in $F'$, thereby $H_{|F|}$ has $k(k-1)/2$ edges that $F'$ does not. On the other hand, $H_{|F|}$ is missing at most $k(k-1)/2$ edges between $\{v_{|F|-k+1},\ldots,v_{|F|}\}$ and $\{v_1,\ldots v_{k}\}$ which are present in $F$.

Finally, if none of the previous assumptions hold, then $|V(H)\setminus V(F)|\le k^3$ and there must exist a set of $k$ consecutive vertices in $V(F)$. Now consider a maximal set of consecutive vertices, with respect to the cyclic order, of size at least $k$.
Roughly speaking, we will show that such a set of vertices can be shifted in one direction until it merges with the next vertex in $V(F)$, without decreasing the number of edges. Without loss of generality, assume that $\{1,\ldots,i\}\subset V(F)$ with $i\ge k$ and $i+1,n \not \in V(F)$.
We may also assume that 
$|\{i+2,\ldots,i+k\}\cap V(F)|\ge |\{n-k+1,\ldots,n-1\}\cap V(F)|$. Define for $1\le j \le |F|$
\[
w_j =
\left\{
	\begin{array}{ll}
		v_j  & \mbox{if } j  \leq i \\
		v_j-1 & \mbox{otherwise}.
	\end{array}
\right.
\]
Denote by $F''$ the graph induced by $\{w_1,\ldots,w_{|F|}\}$ on $H$.
We will show that $e(F'')\ge e(F')$. 
For $v_j\in \{i+2,\ldots,i+k+1\}$, the number of edges between $w_j$ and $\{1,\ldots i\}$ in $F''$ is exactly one more than the number of edges between $v_j$ and $\{1,\ldots i\}$ in $F'$. On the other hand, for $v_j\in \{n-k+1,\ldots,n-1\}$, the number of edges between $w_j$ and $\{1,\ldots,i\}$ in $F''$ is exactly one less than the number of edges between $v_j$ and $\{1,\ldots,i\}$.
By assumption we have $|\{i+2,\ldots,i+k+1\}\cap V(F)|\ge |\{n-k+1,\ldots,n-1\}\cap V(F)|$, and therefore $e(F'')\ge e(F')$. Since we further have $|\{i+2,\ldots,i+k\}\cap V(F'')|\ge |\{n-k+1,\ldots,n-1\}\cap V(F'')|$, we can proceed recursively until $i+1\not\in V(F'')$, thus eliminating the gap between $i$ and $v_{i+1}$. All but one of the remaining gaps can be eliminated in a similar manner, resulting in a copy of 
$H_{|F|}$, which has at least as many edges as $e(F')$, completing the proof.
\end{proof}

This already shows that $H_s$ is the densest connected subgraph on $s$ vertices. 
Next, we determine its number of edges.
\begin{lemma}\label{lem:Hs_edg_cnt}
We have
\begin{align*}
e(P_s) =
\left\{
	\begin{array}{ll}
		\binom{s}{2} & \mbox{if } s  \le k+1 \\
		\binom{s}{2}-\binom{s-k}{2}=ks-\binom{k+1}{2} & \mbox{if } s > k+1,
	\end{array}
\right.
\end{align*}

\begin{align*}
e(H_s) =
\left\{
	\begin{array}{ll}
		e(P_s)  & \mbox{if } s  \le n-k \\
		e(P_s)+\binom{s-(n-k-1)}{2} & \mbox{if } n-k< s \le n.
	\end{array}
\right.
\end{align*}
\end{lemma}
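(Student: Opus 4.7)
The plan is to prove both identities by direct counting of edges, using the explicit descriptions of $P_s$ and $H_s$ as $k$-th powers of a path and an induced subgraph of a cyclic $k$-th power, respectively.

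For $e(P_s)$, I would note that $\{u,v\}$ with $1\le u<v\le s$ is an edge of $P_s$ if and only if $v-u\le k$. When $s\le k+1$, every pair $u<v$ in $[s]$ satisfies $v-u\le s-1\le k$, so all $\binom s2$ pairs are edges. When $s>k+1$, the non-edges of $P_s$ inside $[s]$ are the pairs with $v-u\ge k+1$; counting them by fixing $u\in[1,s-k-1]$ and letting $v$ range in $[u+k+1,s]$ gives $\sum_{u=1}^{s-k-1}(s-u-k)=\binom{s-k}2$, which yields $e(P_s)=\binom s2-\binom{s-k}2$. A short algebraic simplification then gives $ks-\binom{k+1}2$.

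For $e(H_s)$, I would split the edges of $H$ restricted to $[s]$ according to the cyclic distance realized: for $1\le u<v\le s$ the pair $\{u,v\}$ is an edge of $H$ iff either $v-u\le k$ (``path-type'' edge) or $u+(n-v)\le k$, i.e.\ $v-u\ge n-k$ (``wrap-around'' edge). For $n$ sufficiently large (which we always assume), these two ranges are disjoint, so the two contributions simply add. The path-type edges in $[s]$ are exactly the edges of $P_s$, contributing $e(P_s)$. The wrap-around edges exist only when some pair $u<v$ in $[s]$ satisfies $v-u\ge n-k$, which requires $s\ge n-k+1$; this gives the case distinction in the second formula.

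To count the wrap-around contribution when $s\ge n-k+1$, I would fix $v$ in $[n-k+1,s]$ and count admissible $u\in[1,v-(n-k)]$, yielding
\[
\sum_{v=n-k+1}^{s}\bigl(v-(n-k)\bigr)=\sum_{j=1}^{s-(n-k)}j=\binom{s-(n-k-1)}2,
\]
which is exactly the claimed extra term. Combining with the $P_s$ count gives the second formula. There is no real obstacle here; the only thing one has to be careful about is the disjointness of the two edge types (ensured because $n>2k$ for large $n$) and the bookkeeping of the summation index so that the final binomial coefficient comes out as $\binom{s-(n-k-1)}2$ rather than $\binom{s-(n-k)+1}2$, which are of course the same number.
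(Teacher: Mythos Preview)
Your proposal is correct and takes essentially the same approach as the paper: direct edge counting, splitting into path-type edges (cyclic distance realized as $v-u\le k$) and wrap-around edges (cyclic distance realized as $n-(v-u)\le k$), with the non-edge count $\binom{s-k}{2}$ for $P_s$ and the triangular sum $\binom{s-(n-k-1)}{2}$ for the wrap-around contribution. The paper's proof is merely a terser version of exactly this computation.
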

\begin{proof}
For $s\le k+1$, we have $H_s=P_s=K_s$.
For $s>k+1$, start with the clique and then remove downward edges starting with vertex $k+2$ to get
$$e(P_s)=\binom s2-\binom{s-k}2.$$
For $H_s$ with $s>n-k$, we add the edges between the appropriate pairs of vertices $(i,j)$, where $i \in \{n-k+1, \dots, s\}$ and $j \in \{1, \dots, k\}$, yielding $e(H_s)$.
\end{proof}

The next lemma is the primary reason our proof does not work for $k = 3$. Indeed, when $k\leq 3$, we have that $\gamma(H_s)$ is not increasing in $s$.

\begin{lemma}\label{lem:gammas_inc}
The map $\gamma(H_s)$ is increasing for $s>3$ and $\gamma(H_3)=\gamma(H_4)=3$.
\end{lemma}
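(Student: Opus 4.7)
The plan is to reduce $\gamma(H_s)$ to the ratio $e(H_s)/(s-2)$ and then verify monotonicity directly from the three formulas for $e(H_s)$ given in Lemma~\ref{lem:Hs_edg_cnt}. Since $H_s$ is connected and (for $s \geq 3$ and $k \geq 4$) has no isolated vertex, one has $H_s^\circ = H_s$ and $c(H_s^\circ) = 1$, so $\gamma(H_s) = e(H_s)/(s-2)$. The equality $\gamma(H_3) = \gamma(H_4) = 3$ is then immediate from $e(H_3) = 3$ and $e(H_4) = 6$, both coming from the clique case of Lemma~\ref{lem:Hs_edg_cnt}.

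For the strict monotonicity, I would use the elementary equivalence
$$\gamma(H_{s+1}) > \gamma(H_s) \iff \Delta_s > \gamma(H_s), \quad \text{where } \Delta_s := e(H_{s+1}) - e(H_s),$$
and check it in each of the three ranges of Lemma~\ref{lem:Hs_edg_cnt}. In the clique range $4 \le s \le k$ one has $\Delta_s = s$ and $\gamma(H_s) = s(s-1)/(2(s-2))$, and the inequality reduces to $s > 3$. In the linear range $k+1 \le s \le n-k$ one has $\Delta_s = k$ and $\gamma(H_s) = k - k(k-3)/(2(s-2)) < k$ for $k \ge 4$, so the inequality follows; the transition at $s = k+1$ is handled by the observation that both formulas for $e(H_{k+1})$ agree.

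The main obstacle is the cyclic closing range $s > n-k$, where $\gamma(H_s)$ itself can exceed $k$ and no uniform bound is available. Writing $j := s - (n-k-1) \in \{1,\ldots,k+1\}$, one computes $\Delta_s = k + j$ and $\gamma(H_s) = k + (\binom{j}{2} - k(k-3)/2)/(s-2)$, so the required inequality rearranges to $j(s-2) > \binom{j}{2} - k(k-3)/2$. Since the left-hand side is of order $n$ while the right-hand side is a constant depending only on $k$, this holds for all $n$ large enough. The transition at $s = n-k$ is immediate, since $\Delta_{n-k} = k+1$ while $\gamma(H_{n-k}) < k$ from the linear range estimate.
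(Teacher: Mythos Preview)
Your proof is correct and is precisely the ``sequence of quick calculations from Lemma~\ref{lem:Hs_edg_cnt}'' that the paper's proof invokes without writing out; you are simply filling in the details via the equivalence $\gamma(H_{s+1})>\gamma(H_s)\iff \Delta_s>\gamma(H_s)$ and a case analysis over the three formulas. One small imprecision: your stated linear range $k+1\le s\le n-k$ with $\Delta_s=k$ should end at $s=n-k-1$, since (as you yourself note later) $\Delta_{n-k}=k+1$; but you handle $s=n-k$ correctly in both the transition remark and the cyclic range with $j=1$, so the argument is sound.
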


\begin{proof}
The result follows after a sequence of quick calculations from Lemma~\ref{lem:Hs_edg_cnt}.
\end{proof}

Combining the observations that the $H_s$ are the densest connected subgraphs on $s$ vertices and that their density is increasing in $s$ immediately yields that $H$ is the densest connected subgraph. Recall $F^{\circ}$ from Section~\ref{sec:notions_notation}.

\begin{corollary}\label{cor:densest_subgraphs}
For all $F\subset H$ with $F^\circ\neq\emptyset$ we have $\gamma(F)\le\gamma(H_{|F|})$.
Hence, we have $\gamma(s)=\gamma(H_s)$ for all $s>2$.
\end{corollary}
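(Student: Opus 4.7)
My plan is to exploit the observation made immediately after the definition of $\gamma$, namely that $\gamma(F) = \sum_{C \in C(F^\circ)} \mu_F(C)\gamma(C)$ is a convex combination of component densities, and reduce the statement to the connected case, where Lemma~\ref{lem:Hs_edg_maximal} and Lemma~\ref{lem:gammas_inc} will do the work. Concretely, the argument breaks into three ingredients that are all on hand: (i) the convexity bound $\gamma(F) \le \max_{C \in C(F^\circ)} \gamma(C)$; (ii) Lemma~\ref{lem:Hs_edg_maximal} to control the edge count of each component; and (iii) Lemma~\ref{lem:gammas_inc} to move from a component's vertex count to $|F|$.

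Fix $F \subset H$ with $F^\circ \ne \emptyset$ and any component $C \in C(F^\circ)$. By definition of $F^\circ$, $C$ is connected with $|C| \ge 3$, so $C = C^\circ$, $c(C)=1$, and $\gamma(C) = e(C)/(|C|-2)$. Viewing $C$ as a subgraph of $H$ and applying Lemma~\ref{lem:Hs_edg_maximal} gives $e(C) \le e(H_{|C|})$, hence $\gamma(C) \le e(H_{|C|})/(|C|-2) = \gamma(H_{|C|})$. Since $|C| \le |F^\circ| \le |F|$ and Lemma~\ref{lem:gammas_inc} ensures that $s \mapsto \gamma(H_s)$ is non-decreasing for $s \ge 3$, I obtain $\gamma(H_{|C|}) \le \gamma(H_{|F|})$. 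Taking the max over $C$ and combining with the convexity bound yields $\gamma(F) \le \gamma(H_{|F|})$, which is the first claim.

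For the second claim, the inequality $\gamma(s) \le \gamma(H_s)$ for $s > 2$ is then immediate from the first claim and the definition of $\gamma(s)$. The reverse inequality follows by taking $F = H_s$ in the maximization: for $s \ge 3$, $H_s$ is connected with $|H_s|\ge 3$, so $H_s = H_s^\circ$ and $e(H_s^\circ)>0$, making $H_s$ admissible and giving $\gamma(s) \ge \gamma(H_s)$.

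I do not foresee a substantive obstacle; the proof is essentially an assembly of what has been proved in this subsection. The only mild subtlety is that Lemma~\ref{lem:gammas_inc} gives \emph{strict} monotonicity only for $s > 3$ while $\gamma(H_3)=\gamma(H_4)$, but this is harmless since we only use the non-strict form $\gamma(H_{|C|}) \le \gamma(H_{|F|})$ and the degenerate case $|F|=3$ forces $F=H_3$ trivially.
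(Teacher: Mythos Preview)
Your proof is correct and follows essentially the same approach as the paper: bound $\gamma(F)$ by the maximum component density via the convex-combination observation, then apply Lemma~\ref{lem:Hs_edg_maximal} and Lemma~\ref{lem:gammas_inc}. The paper's own proof is just a terser version of what you wrote.
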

\begin{proof}
Since $\gamma(F)$ is the expected value of $\gamma(C)$ over $C \in C(F^\circ)$ with respect to $\mu_F$, it is at most the maximum of $\gamma(C)$ over $C(F^\circ)$. 
Lemmas~\ref{lem:Hs_edg_maximal} and~\ref{lem:gammas_inc} complete the proof.
\end{proof}
\subsection{Properties of dense subgraphs}\label{sec:dense_graphs_properties}
In this section, we discuss $\zeta'$-dense subgraphs $F\subset H$ for $\zeta'$ that is $o(1)$ and $\omega(1/\ln(n))$.
In the remainder, we tacitly use the results from Section~\ref{sec:densest_subgraphs}.
Note that $\gamma(n)=k+O(1/n)$ and hence $\gamma(F)\ge\gamma(n)-\zeta'=k-(1-o(1))\zeta'$.
First, we observe that such graphs cannot have many sparse components.
\begin{lemma}\label{lem:sparse_component_cnt} 
Let $F\subset H$ be a $\zeta'$-dense subgraph for some $\zeta' = o(1)$ with $\zeta' = \omega(1/\ln{n})$.
For $\gamma'\le \gamma(n)-\zeta'$ the number of components $C$ of $F^\circ$ with $\gamma(C)\le\gamma'$ is at most
$$\frac{\zeta'|F^\circ|}{\gamma(n)-\gamma'}.$$
\end{lemma}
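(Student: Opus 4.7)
The plan is to exploit the representation of $\gamma(F)$ as the $\mu_F$-weighted average of the component densities $\gamma(C)$ for $C\in C(F^\circ)$, together with the universal upper bound $\gamma(C)\le\gamma(n)$ which follows from Corollary~\ref{cor:densest_subgraphs} combined with the monotonicity of $s\mapsto\gamma(H_s)$ in Lemma~\ref{lem:gammas_inc}. The target inequality is then a Markov-type tail bound for the distribution $\mu_F$.

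First I would partition $C(F^\circ)$ into $\mathcal{S}=\{C:\gamma(C)\le\gamma'\}$ and its complement $\mathcal{L}$, and bound the two contributions to $\gamma(F)$ separately: the components in $\mathcal{S}$ by $\gamma'$ and those in $\mathcal{L}$ by $\gamma(n)$. This produces
$$\gamma(F)\le\gamma'\sum_{C\in\mathcal{S}}\mu_F(C)+\gamma(n)\sum_{C\in\mathcal{L}}\mu_F(C)=\gamma(n)-(\gamma(n)-\gamma')\sum_{C\in\mathcal{S}}\mu_F(C).$$
Combining this with the $\zeta'$-density hypothesis $\gamma(F)\ge\gamma(n)-\zeta'$ and noting that $\gamma'\le\gamma(n)-\zeta'$ guarantees $\gamma(n)-\gamma'\ge\zeta'>0$, one obtains
$$\sum_{C\in\mathcal{S}}\mu_F(C)\le\frac{\zeta'}{\gamma(n)-\gamma'}.$$

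Finally I would convert this weight bound into a cardinality bound. By construction every $C\in C(F^\circ)$ satisfies $|C|\ge 3$ (isolated vertices and isolated edges are excluded from $F^\circ$), so each term $|C|-2$ in the definition of $\mu_F$ is at least $1$. Writing $W=\sum_{C\in C(F^\circ)}(|C|-2)=r(F^\circ)-c(F^\circ)\le|F^\circ|$, this yields
$$|\mathcal{S}|\le\sum_{C\in\mathcal{S}}(|C|-2)=W\sum_{C\in\mathcal{S}}\mu_F(C)\le\frac{\zeta'|F^\circ|}{\gamma(n)-\gamma'},$$
which is the stated bound.

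I do not foresee a substantive obstacle here: the proof is a short, structural consequence of the convex-combination formula for $\gamma(F)$ and the fact that every component density is dominated by $\gamma(n)$. It is worth noting that the asymptotic hypotheses $\zeta'=o(1)$ and $\zeta'=\omega(1/\ln n)$ are not actually used in the argument itself; they are presumably imposed for compatibility with the subsequent applications of the lemma in Section~\ref{sec:lemma4.4good_proof}.
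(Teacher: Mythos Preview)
Your proof is correct and follows essentially the same route as the paper: bound the weighted average $\gamma(F)$ from above using $\gamma(C)\le\gamma'$ on $\mathcal{S}$ and $\gamma(C)\le\gamma(n)$ on the complement, extract the weight bound $\mu_F(\mathcal{S})\le\zeta'/(\gamma(n)-\gamma')$, and then convert this to a cardinality bound via $|C|-2\ge 1$ and $\sum_C(|C|-2)\le|F^\circ|$. Your remark that the asymptotic constraints on $\zeta'$ are not used in the argument itself is also accurate.
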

\begin{proof}
Let $L\subset C(F^\circ)$ be the set of components with $\gamma(C)\le\gamma'$.
Then for $\mu_F(L) = \sum_{C \in L}\mu_F(C)$,
$$\gamma(n)-\zeta'\le\gamma(F)\leq\mu_F(L)\gamma'+(1-\mu_F(L))\gamma(n)$$
and hence $\mu_F(L)\le\zeta'/(\gamma(n)-\gamma')$.
On the other hand, we have $\mu_F(L)\ge |L|/|F^\circ|$.
\end{proof}
Next, we observe that the order of $\zeta'$-dense subgraphs is large and their component count is small.
\begin{lemma}\label{lem:dense_comp_cnt}
For each $\zeta' = o(1)$ with $\zeta' = \omega(1/\ln{n})$, every $\zeta'$-dense subgraph $F\subset H$ has a component $C\in C(F^\circ)$ of order at least $|C|\ge L$ with $L=\min\{s:\gamma(s)\ge\gamma(n)-\zeta'\}=\Theta(1/\zeta')$.
Furthermore, we have
$$c(F^\circ)\le\left(\frac{1}{B}+\sqrt{\zeta'}\right)|F^\circ|,$$
where $B=\min\{s:\gamma(s)>\gamma(n)-\sqrt{\zeta'}\}=\Theta(1/\sqrt{\zeta'})$.
\end{lemma}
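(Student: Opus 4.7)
The proof will proceed in two parts. For the existence of a component of order at least $L$, I will use that by the formula in Section~\ref{sec:notions_notation}, $\gamma(F)$ is the $\mu_F$-expectation of $\gamma(C)$ over $C \in C(F^\circ)$. Since $F$ is $\zeta'$-dense, $\gamma(F) \geq \gamma(n) - \zeta'$, so some $C^\star \in C(F^\circ)$ must attain $\gamma(C^\star) \geq \gamma(n) - \zeta'$. As $C^\star$ is connected with $|C^\star| > 2$, Corollary~\ref{cor:densest_subgraphs} yields $\gamma(|C^\star|) \geq \gamma(C^\star) \geq \gamma(n) - \zeta'$, so by the definition of $L$, $|C^\star| \geq L$.

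For the asymptotic identities $L = \Theta(1/\zeta')$ and $B = \Theta(1/\sqrt{\zeta'})$, I will use the explicit edge counts in Lemma~\ref{lem:Hs_edg_cnt}. A short computation shows that for $k+1 < s \leq n - k$,
\[
\gamma(H_s) = \frac{ks - \binom{k+1}{2}}{s-2} = k - \frac{k(k-3)}{2(s-2)},
\]
while $\gamma(n) = kn/(n-2) = k + 2k/(n-2)$. Thus
\[
\gamma(n) - \gamma(H_s) = \frac{2k}{n-2} + \frac{k(k-3)}{2(s-2)},
\]
and since $k \geq 4$ the second term is strictly positive. The hypothesis $\zeta' = \omega(1/\ln n)$ in particular ensures $\zeta' \gg 1/n$, so the $2k/(n-2)$ summand is negligible, and solving $\gamma(n) - \gamma(H_s) \leq \zeta'$ yields $L \sim k(k-3)/(2\zeta') = \Theta(1/\zeta')$; the same derivation with $\sqrt{\zeta'}$ in place of $\zeta'$ gives $B = \Theta(1/\sqrt{\zeta'})$.

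For the component-count bound, I will partition $C(F^\circ)$ into light components (those with $\gamma(C) \leq \gamma(n) - \sqrt{\zeta'}$) and heavy components (the rest). Lemma~\ref{lem:sparse_component_cnt} applied with $\gamma' = \gamma(n) - \sqrt{\zeta'}$ bounds the number of light components by $\zeta' |F^\circ|/\sqrt{\zeta'} = \sqrt{\zeta'} |F^\circ|$. For a heavy component $C$, Corollary~\ref{cor:densest_subgraphs} gives $\gamma(|C|) \geq \gamma(C) > \gamma(n) - \sqrt{\zeta'}$, so $|C| \geq B$ by the definition of $B$, and since the heavy components are vertex-disjoint subgraphs of $F^\circ$ their count is at most $|F^\circ|/B$. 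Adding the two bounds gives $c(F^\circ) \leq (1/B + \sqrt{\zeta'})|F^\circ|$.

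The main technical obstacle is verifying the closed-form asymptotics for $L$ and $B$, but this is a direct calculation from Lemma~\ref{lem:Hs_edg_cnt}. The one delicate point is that the correction $2k/(n-2)$ in $\gamma(n)$ does not distort the $\Theta$-estimates, which is precisely why the restriction $\zeta' = \omega(1/\ln n)$ (forcing $\zeta' \gg 1/n$) is imposed; the rest is bookkeeping using the additivity of $r$ and $|\cdot|$ over components of $F^\circ$.
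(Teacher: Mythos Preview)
Your proposal is correct and follows essentially the same approach as the paper: the expectation interpretation of $\gamma(F)$ to find a dense component, Corollary~\ref{cor:densest_subgraphs} to pass from $\gamma(C)$ to $\gamma(|C|)$, Lemma~\ref{lem:sparse_component_cnt} with $\gamma'=\gamma(n)-\sqrt{\zeta'}$ for the light components, and the trivial packing bound $|F^\circ|/B$ for the heavy ones. The only difference is cosmetic---you partition $C(F^\circ)$ by density and the paper partitions by order ($<B$ versus $\ge B$)---and you spell out the computation of $L,B$ from Lemma~\ref{lem:Hs_edg_cnt} explicitly where the paper merely cites that lemma.
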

\begin{proof}
Since $\gamma(F)$ is an expected value over $\gamma(C)$ for $C\in C(F^\circ)$, there exists a component $C\in C(F^\circ)$ with $\gamma(C)\ge\gamma(n)-\zeta'$.
Lemma~\ref{lem:gammas_inc} and Corollary~\ref{cor:densest_subgraphs} yield
$|C|\ge L$.
By Lemma~\ref{lem:sparse_component_cnt} there are at most $\sqrt{\zeta'}|F^\circ|$ components $C$ with $\gamma(C)\le\gamma(n)-\sqrt{\zeta'}$, so by Lemma~\ref{lem:gammas_inc} the number of components of order less than $B$ is at most $\sqrt{\zeta'}|F^\circ|$.
Furthermore, there can be at most $|F^\circ|/B$ components of order at least $B$.
Finally, $L=\Theta(1/\zeta')$ and $B=\Theta(1/\sqrt{\zeta'})$ follows from Lemma~\ref{lem:Hs_edg_cnt} and Corollary~\ref{cor:densest_subgraphs}.
\end{proof}
Next, we observe that there are not too many ways to embed $\zeta'$-dense subgraphs into $H$.
First, we note that there are not too many ways to embed $H_s$.
\begin{lemma}\label{lem:Hs_aut}
We have $\aut(H_s)=2$ for $2k+1\le s<n$.
\end{lemma}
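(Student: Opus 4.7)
The plan is to analyze $\aut(H_s)$ via the induced action on the maximum cliques of $H_s$. The reflection $\sigma: v \mapsto s+1-v$ is an automorphism, since $|\sigma(u)-\sigma(v)|=|u-v|$ preserves cyclic distance in $[n]$, and hence edges of $H_s$. So $\aut(H_s) \geq 2$, and it remains to show $\aut(H_s) \leq 2$.

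First, I would identify the maximum cliques of $H_s$. A standard gap-analysis on $[n]$ shows that, for $n$ sufficiently large (certainly for $n > k(k+1)$), every $(k+1)$-clique in the cycle power $H$ consists of $k+1$ cyclically consecutive vertices in $[n]$. Since $s<n$, the removed set $\{s+1,\ldots,n\}$ is a single contiguous cyclic arc, so no wrap-around $(k+1)$-clique of $H$ can lie in $[s]$. Hence the maximum cliques of $H_s$ are exactly $Q_i := \{i, i+1, \ldots, i+k\}$ for $i = 1, \ldots, s-k$. Moreover, $|Q_i \cap Q_j| = \max(0, k+1-|i-j|)$, so $|Q_i \cap Q_j| = k$ iff $|i-j|=1$. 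Thus the auxiliary graph $T$ on $\{Q_1,\ldots,Q_{s-k}\}$ with edges $\{Q_i, Q_{i+1}\}$ is a path on $s-k \geq k+1 \geq 5$ vertices, whose automorphism group is $\mathbb{Z}_2$, generated by reversal.

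Any $\pi \in \aut(H_s)$ permutes maximum cliques and preserves intersection sizes, so induces an automorphism of $T$, which is either the identity or the reversal. In the identity case $\pi(Q_i) = Q_i$ for every $i$, and applying $\pi$ to $Q_i \setminus Q_{i+1} = \{i\}$ and $Q_{i+1}\setminus Q_i = \{i+k+1\}$ forces $\pi(v) = v$ for every $v \in \{1,\ldots,s-k-1\} \cup \{k+2,\ldots,s\}$. For $s \geq 2k+1$, this set is all of $[s]$ except at most the single vertex $k+1$ (which arises only when $s=2k+1$), and that vertex is then pinned by the bijectivity of $\pi$; so $\pi = \mathrm{id}$. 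The reversal case proceeds identically and yields $\pi = \sigma$.

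The main obstacle I anticipate is the regime $n-k < s < n$, in which $H_s$ acquires wrap-around edges not present in the power-of-path graph $P_s$, and these could a priori create extra maximum cliques or symmetries. The decisive observation is that every wrap-around $(k+1)$-clique of $H$ must include a vertex in $\{s+1,\ldots,n\}$ and therefore cannot lie in $[s]$; consequently the maximum-clique structure of $H_s$ coincides with that of $P_s$, and the argument applies uniformly across both regimes $s \leq n-k$ and $n-k < s < n$.
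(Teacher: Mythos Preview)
Your proof is correct and takes a genuinely different route from the paper. The paper argues via the degree sequence, splitting into the two regimes $2k+1\le s\le n-k$ and $n-k<s<n$: in the first it uses that there are exactly two vertices of each degree $k,\dots,2k-1$ and then propagates the identification through neighbourhoods; in the second it carries out a more delicate identification using which low-degree vertices have how many neighbours of degree $2k$. Your argument instead works with the maximum cliques $Q_i=\{i,\dots,i+k\}$ of $H_s$, observes that their intersection graph is a path on $s-k\ge k+1\ge 5$ vertices, and reads off the automorphism from the induced action on this path.

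The main advantage of your approach is uniformity: once you note that for $s<n$ (and $n$ large) every $(k+1)$-clique of $H$ contained in $[s]$ is a non-wrap-around run $Q_i$, the wrap-around edges present when $n-k<s<n$ play no role, and the two regimes collapse into one argument. The paper's degree approach is more hands-on and requires no auxiliary structure, but pays for this with the case split and the somewhat fiddly neighbour-degree bookkeeping in the second regime. Your only implicit hypothesis beyond the paper is $n>k(k+1)$ for the clique classification in $H$, which is harmless since the paper works throughout with $n$ sufficiently large.
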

\begin{proof}
For $2k+1\le s\le n-k$, we have exactly two vertices of degree $d$ for each $k\le d<2k$.
In addition, each vertex with degree $d$ for $k<d<2k$ has exactly one neighbor of degree $k$. Thus, we have two choices where to place one vertex of degree $k$, but this determines the positions of all vertices of degrees less than $2k$. Assume without loss of generality that we chose the identity for these $2k$ vertices. But now, $[k]$ is contained only in the neighborhood of $k+1$, so $k+1$ needs to stay in place. Then $\{2,\dots,k+1\}$ is only contained in the neighbourhood of $k+2$, so this vertex needs to stay in place as well. Iterating this argument completes the proof for this case.

Now let $n-k<s<n$.
Out of the vertices of degree $2k-(n-s)$, exactly two have exactly one neighbor of degree $2k$; namely $1$ and $s$.
Assume without loss of generality that $1$ maps to $1$.
Hence, $k+1$, its only neighbor of degree $2k$, maps to $k+1$.
For $2\le v\le k-(n-s)+1$ there is exactly one vertex of degree $2k-(n-s)$ with exactly $v$ vertices of degree $2k$ as neighbors, one of them being $k+1$, namely vertex $v$; thus $v$ maps to $v$. Furthermore, for $k-(n-s)+2\le v\le k$ there is exactly one vertex of degree $v+k-1$ with $k+1$ as neighbor, namely $v$; so $v$ maps to $v$. This fixes $[k+1]$ and the rest is fixed by their lower $k$ neighbors. 
\end{proof}

In the following we will consider embeddings, i.e.~injections $\iota:V(F)\rightarrow V(G)$ satisfying $\{\iota(u),\iota(v)\}\in E(G)$ for every $\{u,v\}\in E(F)$. The previous lemma implies that once the image of an arbitrary vertex of $H_s$ has been fixed, there are exactly two ways to embed $H_s$ into $H$ for $2k<s<n$.
We use this observation to control the number of embeddings of $\zeta'$-dense $F\subset H$. 

We call a partition $\mathcal{D}(F)$ of $V(F)$ a \emph{consistent partition} if for every $D\in \mathcal{D}(F)$ we have that $F[D]$ forms a copy of $H_s$ for some $s\in\{1,2k+1,\ldots,n\}$.
\begin{lemma}\label{lem:red_vertex_bound}
Let $b$ be a sufficiently large constant. Then for all $n$ large enough, the following holds. 
For any good $\zeta'$-dense subgraph $F\subset H$, where $\zeta' = o(1)$ and $\zeta' = \omega(1/\ln{n})$, there exists a consistent partition $\mathcal{D}(F^{\circ})$ with $|\mathcal{D}(F^{\circ})|\le b \sqrt{\zeta'} r(F^\circ)$.
\end{lemma}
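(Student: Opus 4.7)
The plan is to construct the partition by distinguishing ``red'' and ``green'' vertices of $F^\circ$ based on their degree and using maximal runs of greens as large $H_s$-parts. Call $v \in V(F^\circ)$ \emph{red} if $d_{F^\circ}(v) < 2k$, and \emph{green} otherwise; write $R$ for the set of reds. The key structural observation is that if $v$ is green, then all $2k$ cyclic $H$-neighbors $v \pm 1,\dots,v \pm k$ lie in $V(F^\circ)$ and all $2k$ incident $H$-edges belong to $F^\circ$ --- otherwise $v$ could not have full degree. Consequently, for any maximal run of cyclically consecutive greens $D = \{v, v+1, \dots, v+t-1\}$ with $t \ge 2k+1$, every edge of $H[D]$ is in $F^\circ$, so $F^\circ[D]$ is a copy of $H_t$ by the rotational symmetry of $H$.

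The first technical step is to show $|R| = O(\sqrt{\zeta'})\, r(F^\circ)$. The $\zeta'$-density of $F$ together with $\gamma(n) = k - O(1/n)$ yields $e(F^\circ) \ge \bigl(k - (1+o(1))\zeta'\bigr)(|F^\circ| - 2c(F^\circ))$, and the degree-sum identity gives
\[
\sum_{v \in V(F^\circ)}\bigl(2k - d_{F^\circ}(v)\bigr) = 2k|F^\circ| - 2e(F^\circ) \le 4k\,c(F^\circ) + 2(1+o(1))\zeta'|F^\circ|.
\]
Each red vertex contributes at least $1$ to the left-hand side, so $|R| \le 4k\,c(F^\circ) + O(\zeta')|F^\circ| = O(\sqrt{\zeta'})|F^\circ|$ by Lemma~\ref{lem:dense_comp_cnt}. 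The same lemma gives $c(F^\circ) = O(\sqrt{\zeta'})|F^\circ|$, so $r(F^\circ) = (1 - o(1))|F^\circ|$ and the desired bound on $|R|$ follows.

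The final step partitions $V(F^\circ)$ and counts. Components of $F^\circ$ of order at most $2k$ have density at most $\gamma(H_{2k})$, which is bounded away from $\gamma(n)$ for $k\ge 4$ by Lemma~\ref{lem:gammas_inc}; hence by Lemma~\ref{lem:sparse_component_cnt} their total vertex count is $O(\zeta')|F^\circ|$ and we break each such vertex into a singleton. In every larger component we take each red as a singleton, each maximal green run of length $\ge 2k+1$ as a single $H_s$-part, and each vertex of a shorter green run as a singleton. The crucial count is the number of green runs: the cyclic predecessor and successor of any maximal green run must lie in $V(F^\circ)$ (by greenness of the endpoint) but fail to be green (by maximality), hence are red, and each red flanks at most two green runs. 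Thus there are at most $|R|$ green runs, contributing at most $|R|$ long parts and at most $2k|R|$ singletons from short runs. Summing, $|\mathcal{D}(F^\circ)| \le (2k+2)|R| + O(\zeta')|F^\circ| \le b\sqrt{\zeta'}\, r(F^\circ)$ for $b$ sufficiently large. The main obstacle I anticipate is the careful treatment of the flanking argument around the boundaries of $V(F^\circ)$-arcs (where the first and last $k$ positions are forced to be red for positional reasons) and the degenerate case $V(F^\circ) = [n]$ with no reds, which forces $F^\circ = H$ and trivially yields a single $H_n$-part.
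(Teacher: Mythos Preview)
Your proof is correct and proceeds along a genuinely different line from the paper's argument. The paper builds the partition \emph{dynamically}: it forms the auxiliary graph $H'$, the $k$-th power of the cycle on $V(F^\circ)$ in the inherited cyclic order, notes $F^\circ\subset H'$, starts from the trivial one-block partition of $V(F^\circ)$, and then removes the $k|F^\circ|-e(F^\circ)$ edges of $E(H')\setminus E(F^\circ)$ one at a time, observing that each removal splits a block into at most $4k$ blocks while maintaining the invariant that each block induces a power of a path in the current graph. The bound on the number of removed edges is exactly the deficiency you compute via the degree sum, and both proofs then invoke Lemma~\ref{lem:dense_comp_cnt} to absorb the $c(F^\circ)$ term into $O(\sqrt{\zeta'})r(F^\circ)$.

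What your static red/green approach buys is an explicit description of the large blocks as maximal full-degree arcs, and it sidesteps a small wrinkle in the paper's write-up (the initial one-block partition of $H'$ is not literally a copy of any $H_s$ when $|F^\circ|<n$, since $H'$ is then $2k$-regular). What the paper's approach buys is brevity: it does not need to separately dispose of small components or short green runs, since the edge-removal process handles all cases uniformly. Note, incidentally, that your small-component step is harmless but redundant: every vertex in a component of order at most $2k$ already has degree below $2k$ and is therefore red, so those singletons are already absorbed in the $(2k+2)|R|$ term.
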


\begin{proof}
Consider the enumeration $v_1<v_2<\ldots<v_{|F^{\circ}|}$ of $V(F^{\circ})=\{v_1,\ldots,v_{|F^{\circ}|}\}$.
Denote by $H'$ the $k$-th power of the Hamilton cycle on $V(F^{\circ})$, maintaining the cyclic order.
Clearly $F^{\circ}\subset H'$ and Lemma~\ref{lem:dense_comp_cnt} implies that the partition $\mathcal{D}(H')$ which consists of one part is consistent.
Repeatedly removing edges from $H'$ will refine the partition, maintaining the invariant that each part forms a $k$-th power of a path in the current graph. 
Removing an edge, where both endpoints are in the same part, will force us to split the part into two disjoint parts, in order to maintain a consistent partition.
Furthermore, if any of these newly created parts have at most $2k$ vertices, then each of the vertices will form their own part.
Therefore, removing any edge subdivides a part into at most $4k$ parts.
Now, the number of edges removed is 
\[
k|F^{\circ}|-e(F^{\circ})\le k|F^{\circ}|-(\gamma(n)-\zeta')(r(F^{\circ})-c(F^{\circ}))\le \zeta'r(F^{\circ})+(k+\gamma(n))c(F^{\circ}),
\]
where we used $|F^{\circ}|=r(F^{\circ})+c(F^{\circ})$ in the last inequality.
By Lemma~\ref{lem:dense_comp_cnt} we have $c(F^\circ)=O(\sqrt{\zeta'})|F^{\circ}|$, which implies $c(F^\circ)=O(\sqrt{\zeta'})r(F^{\circ})$,
completing the proof.
\end{proof}

Denote by $Y_F(G)$ the number of embeddings of $F$ into $G$.

\begin{lemma}\label{lem:dense_emb_count}
Let $F\subset H$ be a graph without isolated vertices and edges, and $\mathcal{D}(F)$ be a consistent partition, then
\[
Y_{F}(H) \le [n]_{c(F)} (4k)^{|\mathcal{D}(F)|}.
\]
\end{lemma}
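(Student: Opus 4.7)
The plan is to count embeddings of $F$ into $H$ by traversing the parts of $\mathcal{D}(F)$ along a spanning-tree structure inside each connected component of $F$. First I would set up the following: for each $C \in C(F)$, the parts of $\mathcal{D}(F)$ contained in $C$ form a connected graph under the adjacency ``two parts are adjacent if joined by an $F$-edge'', because $C$ is connected and each $F[D]$ is itself connected (as $H_s$ is connected for $s = 1$ and for $s \ge 2k+1$). Choose a spanning tree $T_C$ of this graph, root it at an arbitrary part $D_C^{*}$, and select a root vertex $r_C \in D_C^{*}$.

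The key technical ingredient is that, given the image of any single vertex in a part $D$ with $|D| = s \in \{1, 2k+1, \ldots, n\}$, there are at most $2$ ways to extend to an embedding of $F[D]$ into $H$. For $s = 1$ this is trivially $1$; for $s = n$ the embedding is an automorphism of $H$, and $|\aut(H)| = 2n$, so fixing one image leaves $2$. For $2k+1 \le s < n$, Lemma~\ref{lem:Hs_edg_maximal} forces any copy of $H_s$ in $H$ to sit on a contiguous arc of $s$ vertices (since for any $s$-set $S$ we have $e(H[S]) \le e(H_s)$, and a copy of $H_s$ on $S$ must use every edge of $H[S]$; moreover $H_s$ has a vertex of degree $2k$ whose image sees all of its $H$-neighbors inside $S$, forcing $S$ to contain a full arc of length $2k+1$, and iterating pushes $S$ itself to be an arc). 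Combined with Lemma~\ref{lem:Hs_aut} (i.e.~$\aut(H_s) = 2$) and vertex-transitivity of $H$, this gives $Y_{H_s}(H) \le 2n$, so fixing $\iota(v)$ for any one vertex $v \in V(H_s)$ leaves at most $2$ extensions.

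With these tools in place, I would count embeddings of $F$ into $H$ as follows. First, pick the images $\{\iota(r_C)\}_{C \in C(F)}$, which must be distinct vertices of $[n]$: at most $[n]_{c(F)}$ choices. Then, for each component $C$, traverse $T_C$ starting from $D_C^{*}$. The embedding of $F[D_C^{*}]$ extends from $\iota(r_C)$ in at most $2$ ways by the previous paragraph. For each non-root part $D$ with parent $D'$ in $T_C$, fix some $F$-edge $\{u, v\}$ with $u \in D'$ and $v \in D$ (one exists by construction of $T_C$); since $\iota(u)$ is already determined from the parent, $\iota(v)$ must be a neighbor of $\iota(u)$ in $H$, giving at most $\Delta = 2k$ options, after which the embedding of $F[D]$ extends from $\iota(v)$ in at most $2$ further ways. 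So each non-root part contributes at most $4k$ options, and multiplying yields
$$ Y_F(H) \le [n]_{c(F)} \cdot 2^{c(F)} \cdot (4k)^{|\mathcal{D}(F)| - c(F)} \le [n]_{c(F)} \cdot (4k)^{|\mathcal{D}(F)|}, $$
using $2 \le 4k$. This is a valid upper bound since we do not need to verify injectivity or the preservation of $F$-edges crossing between parts other than the chosen tree edges; any violation simply means the counted map is not actually an embedding.

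The main delicate point I foresee is the ``at most $2$ extensions'' claim, which must be verified uniformly across $s = 1$, $2k+1 \le s < n$, and $s = n$. This is precisely where it becomes crucial that consistent partitions forbid intermediate sizes $2 \le s \le 2k$: for those sizes $\aut(H_s)$ can be as large as $s!$ (indeed $H_s = K_s$ for $s \le k+1$), which would destroy the $4k$ bound per part. Once the single-part extension count is in hand, the tree traversal becomes a routine product bound.
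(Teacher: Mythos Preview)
Your argument is correct and follows essentially the same approach as the paper: choose anchor (root) vertices for the components, then extend part-by-part using that a single fixed vertex image leaves at most two embeddings of $H_s$ into $H$, paying a factor $2k$ for the cross-edge into each new part. You are in fact slightly more careful than the paper in two places---you explicitly justify the ``at most two'' claim via the arc structure of copies of $H_s$ (which the paper asserts from Lemma~\ref{lem:Hs_aut} without addressing why every copy sits on an arc), and you handle the case $s=n$ separately---and your intermediate bound $2^{c(F)}(4k)^{|\mathcal D(F)|-c(F)}$ is marginally sharper before you relax it.
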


\begin{proof}
Enumerate the components $C(F^\circ)$ by $C_1,\dots,C_{c(F^\circ)}$.
Fix an arbitrary vertex in each component, which we call the \emph{anchor}.
Now, choose $\iota$ by first choosing the images of the anchors.
Subsequently, choose the images of the remaining vertices in the components $C_i$ for $1\le i \le c(F)$. 
This yields the bound
$$Y_{F}(H) \le [n]_{c(F)} \prod_iZ_i,$$
where $Z_i$ is the number of embeddings of $C_i$ into $H$ that map the anchor to the vertex $1$.
Now, we want to use Lemma~\ref{lem:Hs_aut} to bound $Z_i$. 
Consider the set $\{D\in \mathcal{D}(F): D\subset C_i\}$, and enumerate the elements as $D_1,\ldots,D_{\ell}$, 
such that the anchor is in $D_1$ and the graph induced by $D_1 \cup \ldots \cup D_j$ is connected for every $1\le j \le \ell$.
Using Lemma~\ref{lem:Hs_aut}, we infer that 
the number of embeddings of $H_s$ into $H$ which map a given vertex of $H_s$ to the vertex 1 of $H$ is 1 if $s=1$ and 2 if $s\ge 2k+1$.

Since $|D_1|$ is either 1 or at least $2k+1$, there are at most $4k$ ways to embed it in $H$, such that the anchor maps to 1. 
Now assume that $D_1,\ldots, D_{j-1}$ have been embedded. 
Then there exists a vertex $u\in D_1\cup\ldots \cup D_{j-1}$ and $v\in D_j$ such that $\{u,v\}\in E(C_i)$.
Since $u$ has already been embedded into $H$, there are at most $2k$ choices for the embedding of $v$, and thus at most $4k$ choices for the embedding of $D_j$. \end{proof}

We will need the following related lemma in Section~\ref{sec:dense_completions_dense_dense}

\begin{lemma}\label{lem:num_dense_completions}
There exists a $\eta=(1+o(1))$, such that the number of dense connected subgraphs in $H$ on $s$ vertices is at most $n \eta^s$.
\end{lemma}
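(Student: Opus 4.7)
The plan is to count pairs $(V, F)$ where $V = V(F) \subseteq [n]$ has size $s$ and $F \subseteq H[V]$ is a $\zeta$-dense connected subgraph, first bounding the admissible vertex sets $V$ containing a fixed vertex $v^*$, then the edge subsets per $V$, and finally combining via the vertex-transitivity of $H$.

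Two structural observations drive the count. First, since $F$ is connected and $F \subseteq H[V]$, the induced subgraph $H[V]$ is also connected, which together with the cyclic structure of $H$ forces $V$ to have at most one cyclic gap of size $\geq k$ (henceforth a ``big'' gap): two such gaps would split $V$ into arcs with no $H$-edges between them. Second, the $\zeta$-density of $F$ gives $e(F) \geq (\gamma(n)-\zeta)(s-2)$, and combined with $e(F) \leq e(H[V])$ and the $2k$-regularity of $H$, this yields $\sum_{v \in V}(2k - \deg_{H[V]}(v)) = 2ks - 2e(H[V]) \leq 2\zeta s + O(1)$, so at most $O(\zeta s)$ vertices of $V$ are \emph{non-full} (have some $H$-neighbor outside $V$). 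Since every positive cyclic gap in $V$ lies between two non-full vertices, and every non-full vertex is adjacent to at most two cyclic gaps, the total number of positive cyclic gaps in $V$ is at most $T := O(\zeta s)$.

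Fix $v^* = 1$. I would count admissible $V$'s containing $1$ by splitting on whether $V$ has a big gap. In the one-big-gap case I would linearize at that gap and parametrize $V$ by: the number $t_2 \leq T$ of small gaps, the $\binom{s-1}{t_2}$ choices for their positions in the linearization, the $(k-1)^{t_2}$ choices for their sizes in $[1,k-1]$, and the $s$ choices for the position of $1$ within $V$; the big gap's size is then determined by $n - s - \sum_{\text{small}} g_i$. Using $\binom{s-1}{t_2}(k-1)^{t_2} \leq (e(k-1)s/t_2)^{t_2}$ together with $\zeta\log(1/\zeta) = o(1)$, summing over $t_2 \leq T$ gives at most $(1+o(1))^s$ such $V$'s. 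The no-big-gap case forces all $t \geq (n-s)/(k-1)$ cyclic gaps to be small, so $n-s = O(\zeta s)$ and $s \geq n(1-o(1))$; an analogous count yields the same bound. For each admissible $V$, the number of edge-subsets $F \subseteq H[V]$ with $e(F) \geq (\gamma(n)-\zeta)(s-2)$ is at most $\binom{e(H[V])}{B(s)} \leq (ek/\zeta)^{\zeta s} = (1+o(1))^s$, where $B(s) := e(H_s) - (\gamma(n)-\zeta)(s-2) = O(\zeta s)$ by Lemma~\ref{lem:Hs_edg_cnt}; I do not enforce the connectedness of $F$, as I only need an upper bound.

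Multiplying these two $(1+o(1))^s$ factors, then summing over $v^* \in [n]$ and dividing by $s$ (each $F$ is counted once per vertex in $V(F)$), yields at most $(n/s)(1+o(1))^s \leq n\eta^s$ for some $\eta = 1+o(1)$. The hard part will be the parametrization in the one-big-gap case: a naive count including an $(n-s)$ factor for the big gap's size would fail to give $(1+o(1))^s$ for small $s$; the fix is to observe that the big gap's size is uniquely determined by the other parameters.
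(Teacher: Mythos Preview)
Your argument is correct (up to cosmetic issues: the edge-subset count should be the tail sum $\sum_{j\le B(s)}\binom{e(H[V])}{j}$ rather than the single term $\binom{e(H[V])}{B(s)}$, but the extra factor $B(s)+1=O(\zeta s)$ is absorbed; and you implicitly use $s\ge L=\Theta(1/\zeta)$ so that the additive $O(1)$ in the defect bound is $O(\zeta s)$).

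Your route differs substantially from the paper's. The paper invokes the consistent-partition machinery (Lemma~\ref{lem:red_vertex_bound}): a dense connected $C$ admits a partition of $V(C)$ into $O(\sqrt{\zeta}\,s)$ blocks that are consecutive in $[n]$, so after fixing $v_1$ the span satisfies $v_s\le(1+kb\sqrt{\zeta})s$, and the vertex-set count is just $\binom{(1+kb\sqrt{\zeta})s}{s}\le\eta^s$. It does not separately bound edge subsets, so it is really proving the statement for \emph{induced} subgraphs, which is all that Section~\ref{sec:dense_completions_dense_dense} needs. Your approach bypasses Lemma~\ref{lem:red_vertex_bound} entirely: you bound the number of cyclic gaps directly by the total degree defect $2ks-2e(H[V])\le 2\zeta s+O(1)$, then enumerate admissible $V$'s by gap positions and sizes. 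This is more elementary, yields the tighter intermediate bound of $O(\zeta s)$ gaps rather than $O(\sqrt{\zeta}\,s)$ blocks, and by additionally counting edge subsets you actually establish the lemma for all dense connected subgraphs as stated. The paper's proof is shorter because it recycles already-built infrastructure; yours is self-contained and slightly stronger.
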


\begin{proof}
Let $C$ be a dense connected subgraph of $H$ on $s$ vertices such that $1=v_1<v_2\ldots < v_s$. 
By Lemma~\ref{lem:red_vertex_bound} $V(C)$ has a consistent partition with at most $b\sqrt{\zeta} |C|$ parts. 
Therefore $v_s\le (1+k b\sqrt{\zeta})s$, as the vertices within each part have to be subsequent, and the number of choices for $v_2,\ldots, v_s$ is at most
\[\binom{(1+k b\sqrt{\zeta})s}{s}\le \left(\frac{2e}{kb\sqrt{\zeta}}\right)^{kb\sqrt{\zeta}s}=\eta^s,\]
where $\eta=(1+o(1))$. The result follows as there are $n$ choices to select $v_1$.
\end{proof}

\subsection{Proof of Lemma~\ref{lem:lemma4.4good}}\label{sec:lemma4.4good_proof}
\begin{proof}[Proof of Lemma~\ref{lem:lemma4.4good}]
As in \cite{riordan2000}, we observe that
$$\frac{X_F(H)}{X_F(K_n)}=\frac{1}{n!}Y_F(H).$$
Recall that a $\zeta'$-dense good subgraph $F\subset H$ consists of $F^{\circ}$ and isolated vertices and thus $r(F^{\circ})=r(F)$. 
First embedding $F^{\circ}$ followed by the $i$ many isolated vertices, together with Lemmas~\ref{lem:dense_comp_cnt},~\ref{lem:red_vertex_bound} and~\ref{lem:dense_emb_count}, implies 
$$\frac{X_F(H)}{X_F(K_n)}=\frac{1}{n!}Y_F(H)= \frac{1}{n!} Y_{F^\circ}(H) i!\le \left(\frac{e}{n-c(F^\circ)}\right)^{r(F^\circ)} (4k)^{b\sqrt{\zeta'} r(F^\circ)}\le \left(\frac{ea^{\sqrt{\zeta'}}}{(1-a\sqrt{\zeta'})n}\right)^{r(F)},$$
where $a$ and $b$ are sufficiently large constants, and we used that $(x)_y \geq \left( \frac{x}{e} \right)^y$, for all $0 \leq y \leq x$.
\end{proof}
\section{Sparse completions}\label{sec:sparse_completions}
In this section we establish Proposition~\ref{prop:sparse_completions}, which states that $S_{\mathrm{s}} = o(1)$, where $S_{\mathrm{s}}$ is the contribution to $S_H'$ (defined in \eqref{eq:S'def}) given by graphs with a $\zeta$-sparse completion.
Recall Section~\ref{sec:notions_notation} and
further the decomposition of $S'_H$ from \eqref{eq:sprimesplit}.
Let $\mathcal F^*$ be the set of good completions (i.e.\ the set of all completions formed by the set of good graphs).
For a completion $F \in \mathcal F^*$, let $\mathcal F(F)$ be the set of good subgraphs $F'\subset F$ with completion $F$.
Then we can decompose
\begin{align}\label{eq:sprime_completions}
S'_H=\sum_{F\in\mathcal F^*}S'_H[F],\text{ where }
S'_H[F]\coloneqq \sum_{F'\in\mathcal F(F)}(p^{-1}-1)^{e(F')}\nu^{r(F')}\frac{X_{F'}(H)}{X_{F'}(K_n)}
\end{align}
and similarly obtain the corresponding representation for $S_{\mathrm s}$ by only summing over $S_H'[F]$ for $\zeta$-sparse $F \in \mathcal F^*$.
To bound $S_H'[F]$ for such $F$, the rough bound given by Lemma~\ref{lem:lemma4.4} for sparse good completions $F$ suffices. Furthermore, using that $r(F') = r(F)=r(F^{\circ})$ for all $F'\in \mathcal{F}(F)$, we have
\begin{align*}
S'_H[F]&\le\left(\frac{3\nu ke}{n}\right)^{r(F)}\sum_{F'\in\mathcal F(F)}(p^{-1}-1)^{e(F')}\\
&\le\left(\frac{3\nu ke}{n}\right)^{r(F)} \sum_{m = 0}^{e(F)} \left[\binom{e(F)}{m}(p^{-1}-1)^{m}\right]\\
&=p^{-e(F)}\left(\frac{3\nu ke}{n}\right)^{r(F)}
\le \underbrace{p^{-(\gamma(n)-\zeta)(r(F)-c(F^\circ))}\left(\frac{3\nu ke}{n}\right)^{r(F)}}_{ \eqqcolon \varphi(F)}\,,
\end{align*} where we used the binomial theorem and the fact that $e(F) = \gamma(F)(r(F) - c(F^\circ))$, as well as $\gamma(F) \le \gamma(n) - \zeta$ since $F$ is $\zeta$-sparse. Note that $\varphi(F)$ factorizes if $F$ is the union of two vertex-disjoint graphs $F_1,F_2$ (i.e.\ $\varphi(F_1 \cup F_2) = \varphi(F_1)\varphi(F_2)$).
Extending the summation domain of $S'_H$, we can factorize it and use the Taylor series of the exponential function in a similar way as in \cite{riordan2000} to obtain \begin{align}\label{eq:factorized}
    S_s \le \sum_{t = 1}^\infty \frac{1}{t!} \left( \sum_{\substack{C \in \mathcal{C}(H)\\|C| \ge 3}}\varphi(C) \right)^t = \exp \left( \sum_{\substack{C \in \mathcal{C}(H)\\|C| \ge 3}}\varphi(C) \right) - 1 = \exp\left(
T''  \right) - 1
\end{align} where $\mathcal{C}(H)$ is the set of all connected \emph{induced} subgraphs of $H$ and 
\begin{align*}
    T'' = \sum_{\substack{C \in \mathcal{C}(H)\\|C| \ge 3}}\varphi(C) = \sum_{\substack{C \in \mathcal{C}(H)\\|C| \ge 3}} p^{-(\gamma(n)-\zeta)(|C|-2)}\left(\frac{3\nu ke}{n}\right)^{|C|-1}.
\end{align*} 
Note that the factorization in (\ref{eq:factorized}) is valid since every $F \in \mathcal{F}^*$ can be represented as the disjoint union of connected induced subgraphs of $H$ of order at least $3$, and as we only consider sets of different subgraphs we can compensate for the inherent ordering caused by taking the $t$-th power by $1/t!$.

The number of connected induced subgraphs of $H$ with $s$ many vertices is at most $nk^{s-1}$, as any such subgraph can be constructed by selecting a vertex in $[n]$ and then the remaining vertices in increasing order, with respect to the cyclic order. With the leading $n$ being compensated for by pulling one factor out, we have
\begin{align*}
    T''\le3\nu ek^2\sum_{s\ge 3}\left(\frac{3\nu ek^2}{p^{\gamma(n)-\zeta}n}\right)^{s-2} = 3\nu ek^2\sum_{s\ge 3}\left(\frac{3\nu ek^2}{(1+\varepsilon)^{\gamma(n) -\zeta}n} 
    \left(\frac{n}{e} \right)^{\frac{\gamma(n) - \zeta}{k}} \right)^{s-2} = o(1)
\end{align*} since $\zeta = \omega(1 / \ln n)$ and $\gamma(n)=k+O(1/n)$.
This shows that $S_{\mathrm s} \le \exp(o(1)) - 1 = o(1)$.
\section{Dense completions}\label{sec:dense_completions}
In this section we establish Propositions~\ref{prop:dense_completions_sparse} and~\ref{prop:dense_completions_dense}. We start with the representation of $S'_H$ given by \eqref{eq:sprime_completions} and the corresponding representation of $S_{\mathrm{ds}}+S_{\mathrm{dd}}$ obtained by only summing over dense completions. For every dense completion $F$, we consider completion-sparse and completion-dense $F'$ which have completion $F$ separately. Recall that the property of being completion-sparse (or completion-dense) is defined \emph{relative} to the density of $F$ (cf. Section~\ref{sec:notions_notation}). 

Recall further that $\mathcal{F}(F)$ is the set of all graphs that have completion $F$. In Section~\ref{sec:dense_completions_sparse}, we upper bound $S_H'[F]$ for $\zeta$-dense completions $F$. First, note that a completion-dense subgraph $F'$ of $F$ is itself $\zeta'$-dense for $\zeta' = (1+\gamma(n)-\zeta)\zeta$. Thus, we can apply Lemma~\ref{lem:lemma4.4good} to upper bound $\phi(F')$ for these graphs. In addition, we can apply Lemma~\ref{lem:lemma4.4} to get an upper bound on the sum over the completion-sparse subgraphs of $F$, which turns out to be insignificant in comparison to the upper bound on the sum over the completion-dense subgraphs of $F$. This proves Proposition~\ref{prop:dense_completions_sparse} and allows us to subsequently limit our attention to the sum $\bar{S}_{\mathrm{d}}$. 

Next, we perform the factorization step which is facilitated by $\bar{S}_{\mathrm{d}}$. Following this, we consider $\zeta$-sparse and $\zeta$-dense connected induced subgraphs separately, which is done in Section~\ref{sec:dense_completions_dense_sparse} and Section~\ref{sec:dense_completions_dense_dense}, respectively, thereby completing the proof of Proposition~\ref{prop:dense_completions_dense}. As before, using the low density of sparse subgraphs, we show that their contribution is negligible. On the other hand, the contribution of dense subgraphs is negligible, since there are few ways to embed them into $H$.

\subsection{Sparse subgraphs with dense completions}\label{sec:dense_completions_sparse}

Recall that a non-trivial good subgraph $F'$ with dense completion $F\in\mathcal F^*$ is completion-dense if $e(F')\ge(1-\zeta)e(F)$ or equivalently $\gamma(F')\ge(1-\zeta)\gamma(F)$.
We note that for a $\zeta$-dense $F$, a completion-dense graph $F'$ with completion $F$ is $\zeta'$-dense when setting $\zeta'=(1+\gamma(n)-\zeta)\zeta$. With this, we let $a$ be sufficiently large and define
$$B_{\mathrm s}=\frac{3\nu ke}{n},\quad B_{\mathrm d}=\frac{e\nu a^{\sqrt{\zeta'}}}{(1-a\sqrt{\zeta'})n},\quad S_{\mathrm s}[F] = \left(\frac{B_{\mathrm s}}{B_{\mathrm d}}\right)^{r(F)}\sum_{\substack{F'\in\mathcal F(F) \\ F' \text{completion-sparse}}} p^{e(F)-e(F')}(1-p)^{e(F')},$$
where $B_{\mathrm{s}}$ and $B_{\mathrm{d}}$ are the respective bases of the exponential in Lemma~\ref{lem:lemma4.4} and Lemma~\ref{lem:lemma4.4good}.
Using these lemmas, for any dense $F\subset H$ we have
\begin{align}\label{eq:densesparsesplit}
    S'_H[F] &\le \sum_{\substack{F'\in\mathcal F(F) \\ F' \text{completion-sparse}}}(p^{-1}-1)^{e(F')} B_{\mathrm s}^{r(F')} + \sum_{\substack{F'\in\mathcal F(F) \\ F' \text{completion-dense}}}(p^{-1}-1)^{e(F')} B_{\mathrm d}^{r(F')} \nonumber \\
    &\le B_{\mathrm d}^{r(F)} \left( \left(\frac{B_{\mathrm s}}{B_{\mathrm d}}\right)^{r(F)} \right . \sum_{\substack{F'\in\mathcal F(F) \\ F' \text{completion-sparse}}}(p^{-1}-1)^{e(F')} +\left. \sum_{m=0}^{e(F)} \binom{e(F)}{m} (p^{-1} - 1)^{m} \right) \nonumber \\
    &= p^{-e(F)} B_{\mathrm d}^{r(F)} \left( S_{\mathrm s}[F] + 1 \right),
\end{align}
where we extended the summation domain of the sum over completion-dense $F'$ to all subgraphs of $F$ and then applied the binomial theorem. 

Recalling that for a completion-sparse $F'$, we have $e(F') < (1 - \zeta)e(F)$ and extending the domain of the sum in $S_{\mathrm{s}}[F]$ to all such subgraphs of $F$ yields \begin{align*}
    S_{\mathrm s}[F] &\le \left(\frac{B_{\mathrm s}}{B_{\mathrm d}}\right)^{r(F)}\sum_{m=0}^{(1-\zeta)e(F)} \binom{e(F)}{m}p^{e(F)-m}(1-p)^{m} 
    = \left(\frac{B_{\mathrm s}}{B_{\mathrm d}}\right)^{r(F)} \mathbb{P}( X \geq \zeta e(F)),
\end{align*} 
where 
$X \sim \text{Bin}(e(F), p)$. 
Now, a Chernoff bound as given in Theorem 2.1 in \cite{janson2000}, $p=o(\zeta)$ and $\zeta \ln(\zeta p^{-1})=\omega(1)$ implies
\[
    \mathbb{P}( X \geq \zeta e(F)) \le \exp \left( - \frac{\zeta}{p} \left(\ln\left(\frac{\zeta}{p}\right)-1\right)p e(F)\right)
    = \exp ( - \omega(1) e(F))=\exp(-\omega(1)|F^{\circ}|),
\]
since $e(F) = \frac{1}{2}\sum_{v\in F} \text{deg} (v)\ge \frac{1}{2}|F^\circ|$ because each vertex in $F^\circ$ has degree at least $1$.  
With this and the insight that $r(F) = r(F^\circ) \le |F^\circ|$ for good $F$, we bound \begin{align*}
    S_{\mathrm s}[F] \le \exp(O( r(F)) - \omega(1)|F^\circ|) \le \exp(O( |F^\circ|) - \omega(1)|F^\circ|) = o(1).
\end{align*} 
Using (\ref{eq:densesparsesplit}), this implies  
\begin{align*}
S_{\mathrm{ds}}+S_{\mathrm{dd}} = \sum_{\substack{F \in \mathcal{F}^* \\ F \text{ is } \zeta\text{-dense}}} S_H'[F] \le (1 + o(1)) \bar{S}_{\mathrm d}.
 \end{align*} 
This establishes Proposition~\ref{prop:dense_completions_sparse}. To prove the still missing Proposition~\ref{prop:dense_completions_dense}, we now perform the same factorization into contributions of the connected components as in (\ref{eq:factorized}) to get that $\bar{S}_{\mathrm d}\le\exp(T)-1$, where
$$T\coloneqq \sum_{\substack{C \in \mathcal{C}(H)\\|C| \ge 3}}p^{-e(C)}B_{\mathrm d}^{|C|-1}$$
is the sum over all connected induced subgraphs $C\subset H$ with $|C|\ge 3$.
To show that $T=o(1)$, we consider the split $T=T_{\mathrm s}+T_{\mathrm d}$, where $T_{\mathrm s}$ is the sum over all $\zeta$-sparse $C$, and $T_{\mathrm d}$ accounts for the rest. In the following, we consider $T_{\mathrm s}$ and $T_{\mathrm d}$ separately.

\subsection{Sparse connected induced subgraphs}\label{sec:dense_completions_dense_sparse}

To upper bound the number of $\zeta$-sparse connected induced subgraphs, we again use that every such $C$ on $s$ vertices can be embedded into $H$ in at most $nk^{s-1}$ ways (as used in Section~\ref{sec:sparse_completions}), which yields
$$T_{\mathrm s}\le B_{\mathrm d}\sum_{\substack{C \in \mathcal{C}(H)\\|C| \geq 3}}\left(\frac{B_{\mathrm d}}{p^{\gamma(n)-\zeta}}\right)^{|C|-2}
\le nkB_{\mathrm d}\sum_{s\ge 3}\left(\frac{kB_{\mathrm d}}{p^{\gamma(n)-\zeta}}\right)^{s-2}=o(1).$$
This leaves us with $T=T_{\mathrm d}+o(1)$, so it remains to show that $T_{\mathrm d}=o(1)$.
\subsection{Dense connected induced subgraphs}\label{sec:dense_completions_dense_dense}
Now we consider the $\zeta$-dense connected induced subgraphs.
Using Lemma~\ref{lem:num_dense_completions}, the bound $e(C)\le\gamma(n)(|C|-2)=\left(k + 2k/(n-2)\right)(|C|-2)$, the fact that $B_{\mathrm{d}} = (1+o(1))e/n$, and Lemma \ref{lem:dense_comp_cnt} to conclude that $s = |C| \ge L = \Theta(1/\zeta)$ for all $C$ of interest, we obtain 
\begin{align*}
    T_{\mathrm d} &\le n \sum_{s \ge L} (1+o(1))^{s} p^{-\gamma(n)(s-2)} B_{\mathrm{d}}^{s-1}\\
    &= n (1+o(1)) B_{\mathrm{d}} \sum_{s \ge L} \left( \frac{(1+o(1))B_{\mathrm{d}}}{p^{\gamma(n)}} \right)^{s-2}\\
    & =  e(1+o(1)) \sum_{s \ge L} \left( \frac{ 1+o(1) }{(1 + \varepsilon)^{\gamma(n)}} \left(\frac{n}{e}\right)^{2/(n-2)}\right)^{s-2} = o(1),
\end{align*} 
as desired. Note that we also used $\left(n/e\right)^{2/(n-2)} = 1 +o(1)$. This shows that $T=T_{\mathrm d}+o(1)=o(1)$, so $S_{\mathrm{ds}}+S_{\mathrm{dd}} \le  (1+o(1)) \bar{S}_{\mathrm d}=o(1)$, which concludes the proof of Proposition~\ref{prop:dense_completions_dense}.

\section{Acknowledgements}
This project was initiated during the research workshop in Buchboden, June 2024.
In particular, we want to thank Angelika Steger and Konstantinos Panagiotou, who paved the way for this project.

\appendix
\section{Appendix}
\label{sec:appendix}
For completeness, we provide some technical details that were not included in the main body of the paper. We start with the proof of the lower bound on the threshold for containment of $H$ in $G(n,p)$.

\subsection{Proof of Theorem~\ref{thm:main}}
\label{subsec:proof_of_main_result}
\begin{proof}[Proof of the lower bound in Theorem~\ref{thm:main}]
Let $k\geq 4$ and $p^*=(e/n)^{1/k}$. We have to show that for all $\varepsilon>0$ and $p\le(1-\varepsilon)p^*$, there is no copy of $H$ in $G(n,p)$ whp. We do that via the first moment method. Let $X$ be the random variable counting the copies of $H$ in $G\sim G(n,p)$. Then
$$ \mathbb{E}(X) = X_H(K_n) p^{kn} = \frac{n!}{2n} p^{kn} $$
$$\leq  n! (1-\varepsilon)^{kn} (e/n)^n \leq e^{n \ln{n} - n + O(\ln{n})} e^{- \varepsilon k n} e^n n^{-n} = e^{O(\ln{n}) - \varepsilon k n} = o(1),$$
where in the second inequality we used Stirling's approximation to get an upper bound on $n!$, as well as the bound $1-x \leq e^{-x}$. We have shown $\mathbb{E}(X) = o(1)$, so we can conclude that $\mathbb{P}(X> 0) = o(1)$, as desired.
\end{proof}
\begin{proof}[Proof of the upper bound in Theorem~\ref{thm:main}]
Let $G \sim G(n,p)$ with $p \geq (1+\varepsilon) p^*$ with $p^* = (e/n)^{1/k}$. Note that by an application of the Chernoff bound, whp $G$ has at least $M = (1+ \varepsilon/2) pN$ many edges. We subsample $G'$ by picking $M$ edges of $G$ uniformly at random whenever $e(G) \geq M$ and picking $M$ edges of $K_n \setminus G$ uniformly at random otherwise. By symmetry, the two-step sampling process that yields $G'$ is distributed as $G(n, M)$. We then note that by Theorem~\ref{thm:main_uniform}, $G'$ whp contains a copy of $H$. Since we have that whp $G' \subset G$, it follows that $G$ also whp contains a copy of $H$.
\end{proof}

\subsection{Proof of Lemma~\ref{lem:lemma4.4}}\label{sec:lemma4.4_proof}
\begin{proof}[Proof of Lemma~\ref{lem:lemma4.4}]
We follow the proof of Lemma 4.4 in \cite{riordan2000}. Since $F$ is good, each of its non-trivial components (i.e., components that are not isolated vertices) has size at least $3$.
Thus, we can improve the bound on the number $k'$ of non-trivial components from $k'\le n/2$ to $k'\le n/3$. Plugging in $\Delta=2k$ yields the desired result.
\end{proof}

\subsection{upper bounding $S_H$ in terms of $S'_H$}\label{sec:lemma4.3_proof}
\begin{proof}[Proof of Lemma~\ref{lem:lemma4.3}]
We follow the proof of Lemma 4.3 in~\cite{riordan2000}. Since we make very few changes to it, we only note the points that differ here. In~\cite{riordan2000} and in what follows, the notation $f \lesssim g$ means $f \leq \left(1+o(1)\right)g$, and $e_H(v) = \max \{e(F) : F \subset H, |F| = v\}$ for any integer $v$.

The changes in the proof are due to our new definition of $S'_H$.
Firstly, in (4.4) and also in the first math display on page 135, we change $2$ to $\nu$. Next, instead of the second display on page 135 in \cite{riordan2000}, we have
$$\frac{S[F]}{S'[F]}\le\exp\left(-\frac{7 r(F)\Delta^2}{2\sqrt np}\right)
\sum_{t=0}^\infty(p^{-1}-1)^t(1+n^{-1/2})^t\frac{\beta_v\beta_{v-2}\cdots\beta_{v-2t+2}}{t!},$$
where $v$ is the number of isolated vertices in $F$.
The argument that we can ignore terms with $t > \sqrt{n}$ goes through as in~\cite{riordan2000}, so in place of equation (4.5), we obtain
$$ \frac{S[F]}{S'[F]} \lesssim\exp\left(-\frac{7 r(F)\Delta^2}{2\sqrt np}\right)
\sum_{t=0}^{\sqrt{n}}(p^{-1}-1)^t(1+n^{-1/2})^t\frac{\beta_v\beta_{v-2}\cdots\beta_{v-2t+2}}{t!}.$$

Similarly to~\cite{riordan2000}, we consider two cases. If $v \leq n - \sqrt{n}$, we get
\begin{align}
\label{eq:case1}
\frac{S[F]}{S'[F]}\lesssim\exp\left(-\frac{7 r(F)\Delta^2}{2\sqrt np}+\frac{2\Delta^2}{p}\right).
\end{align}
Since $F$ is good, we have $c(F) \leq v + (n-v)/3$, since each component that is not an isolated vertex has order at least $3$. Thus,
$$ r(F) = n - c(F) \geq n - \left(v + \frac{n-v}{3}\right) = \frac{2}{3} (n-v) \geq \frac{2}{3} \sqrt{n}.$$
Plugging this lower bound on $r(F)$ into~(\ref{eq:case1}), we get
$$\frac{S[F]}{S'[F]}\lesssim\exp\left(-\frac{\Delta^2}{3p}\right) \leq \exp\left(\frac{1-p}{p}\alpha^2 N\right),$$
as desired, completing the first case. In case $v \geq n - \sqrt{n}$, the same proof as in~\cite{riordan2000} goes through.
\end{proof}

\end{document}